\def\csname opt@stmaryrd.sty\endcsname
\definecolor{darkgreen}{rgb}{0.0, 0.7, 0.0}
\newenvironment{??}{\noindent \color{darkgreen}{\bf ???:} \footnotesize}{}
\definecolor{cyan}{cmyk}{1,0,0,0}
\newcommand{\bdg}{\begin{dg}}
\theoremstyle{plain}
\newtheorem{thm}{Theorem}[section]
\newtheorem{coroll}[thm]{Corollary}
\newtheorem{defn}[thm]{Definition}
\newtheorem{lemma}[thm]{Lemma}
\newtheorem{prop}[thm]{Proposition}
\newtheorem*{thm*}{Main Theorem}
\newtheorem{notn}[thm]{Notation}
\theoremstyle{definition}
\newtheorem{context}[thm]{Context}
\newtheorem{remark}[thm]{Remark}
\tikzset{
  symbol/.style={
    draw=none,
    every to/.append style={
      edge node={node [sloped, allow upside down, auto=false]{$#1$}}}
  }
}
\DeclareMathOperator{\ad}{ad}
\newcommand{\ql}{\overline{\mathbb Q}_{\ell}}
\DeclareMathOperator{\ev}{ev}
\DeclareMathOperator{\Sh}{Sh}
\newcommand{\cM}{\mathcal{M}}
\newcommand{\cX}{\mathcal{X}}
\newcommand{\cZ}{\mathcal{Z}}
\newcommand{\cL}{\mathcal{L}}
\newcommand{\bA}{\mathbb{A}}
\newcommand{\bC}{\mathbb{C}}
\newcommand{\bG}{\mathbb{G}}
\newcommand{\bQ}{\mathbb{Q}}
\newcommand{\on}{\operatorname}
\newcommand{\Spec}{\on{Spec}}
\newcommand{\Higgs}{\mathscr{H}\!it}
\newcommand{\MHiggs}{\mathbb{M}_{Dol,G}}
\newcommand{\Mderham}{\mathbb{M}_{dR, G}}
\newcommand{\NHiggs}{\mathbb{N}_{Dol,G}}
\newcommand{\Nderham}{\mathbb{N}_{dR,G}}
\newcommand{\Bun}{ \mathscr{B}un}
\newcommand{\MHit}{\mathbb{M}_{Hit, G, \cL}}
\DeclareMathOperator{\Hcoh}{H}
\newcommand{\oql}{\overline{\mathbb{Q}}_{\ell}}
\begin{document}
\title{\textbf{Topology of $\mathbb{G}_m$-actions and applications to the moduli of Higgs bundles}}
\author{Andres Fernandez Herrero and Siqing Zhang}
\date{}
\maketitle
\begin{abstract}
We explain some results concerning the topology of varieties and stacks equipped with an action of the multiplicative group $\mathbb{G}_m$. We apply these techniques to the moduli of Higgs bundles. Our main application is to upgrade the cohomological Nonabelian Hodge Theorem in positive characteristic \cite{dCGZ, herrero-zhang-naht} to an isomorphism of cohomology rings compatible with cup product.
\end{abstract}

\begin{section}{Introduction}

Let $C$ be a smooth projective connected curve of genus $g \geq 2$ over an algebraically closed field $k$ of positive characteristic $p>0$. Given a connected reductive group $G$ over $k$, the Nonabelian Hodge Theorem in \cite{groechenig-moduli-flat-connections, chen-zhu} relates the de Rham moduli stack $\cM_{dR,G}(C)$ of $G$-bundles with connection on the curve $C$ and the Dolbeault moduli stack $\cM_{Dol,G}(C')$ of $G$-Higgs bundles on the Frobenius twist $C'$. In \cite{dCGZ, herrero-zhang-naht}, this is refined to be compatible with semistability. In particular, for any given fixed degree $d \in \pi_1(G)$, there is a version of the Nonabelian Hodge Theorem relating the de Rham moduli space $\Mderham^{pd}(C)$ of semistable $G$-connections of degree $pd$ on $C$ and the Dolbeault moduli space $\MHiggs^d(C')$ of semistable $G$-Higgs bundles of degree $d$ on $C'$. 

The Dolbeault (resp. de Rham) moduli space admits a Hitchin (resp. de Rham-Hitchin) morphism to the Hitchin base $A_{G,\omega_{C'}}$.
Applying the Decomposition Theorem \cite{bbdg} to the (de Rham-)Hitchin morphism, we obtain an isomorphism between the intersection cohomology groups of the Dolbeault and de Rham moduli spaces \cite{dCGZ, herrero-zhang-naht}. 
However, for the ordinary cohomology rings of these moduli spaces, the Decomposition Theorem argument no longer applies, and \textit{loc.cit.} only have results locally over $A_{G,\omega_{C'}}$, relating certain cohomology sheaves.
Our main contribution in this paper is to complete the study of the Nonabelian Hodge Theorem at the level of cohomology rings by establishing the following global result.
\begin{thm*}[= \Cref{thm: main thm}]
    Let $k$ be an algebraically closed field of characteristic $p>0$. Let $G$ be a connected reductive group over $k$ that satisfies the low height property (\cite[Defn. 2.29]{herrero2023meromorphic}). Let $C$ be a smooth projective connected curve of genus $g \geq 2$, and let $C'$ denote the Frobenius twist of $C$. Then, for any given degree $d \in \pi_1(G)$, there is a canonical isomorphism of $\ell$-adic cohomology rings: 
    \[\Hcoh^{\bullet}(\MHiggs^d(C'), \ql) \cong \Hcoh^{\bullet}(\Mderham^{pd}(C), \ql).\]
\end{thm*}

We use the proof of the main theorem as an opportunity to discuss some general results on the topology of stacks and schemes equipped with actions of the multiplicative group $\bG_m$ (see \Cref{section: general results}). We make further use of these results in three additional applications:
\begin{itemize}
\item (\Cref{thm: comm diag}). A study of the cohomology of the nilpotent cones in the Dolbeault and de Rham moduli spaces. In the case $G=GL_n$ with $n$ coprime to $pd$, our result subsumes and supplies a correct proof of \cite[Thm. 2.4]{cohomological-nah} (see \Cref{mistake}).
    \item (\Cref{prop: cohomology of Higgs = bun}+ \Cref{coroll: purity of coh of higgs}). A study of the cohomology of the moduli stack of $G$-Higgs bundles. 
    \item (\Cref{prop: very stable G-bundles}). A quick proof of the characterization of very stable $G$ bundles in \cite[Thm. 1.1]{pauly_peon_nieto}.
\end{itemize}

\noindent \textbf{Acknowledgements.} We would like to thank Mark de Cataldo, Andr\'es Ib\'a\~nez N\'u\~nez, Weite Pi and Alfonso Zamora for useful conversations. We would also like to thank Tasuki Kinjo for very helpful comments on a previous version of this manuscript. This material is based
upon work supported by the National Science Foundation under Grant No. DMS-1926686.

    \begin{subsection}{Notation and conventions on derived categories of sheaves} 
    In this paper, we work over an algebraically closed ground field $k$. All stacks and schemes are assumed to live over $k$. We fix once and for all a prime number $\ell$ distinct from the characteristic of our ground field $k$. For any quasi-separated algebraic stack $\cX$ locally of finite type over $k$, we use the following:
    \begin{notn}[Derived category of $\ell$-adic sheaves] Let $D^b_c(\cX, \ql)$ denote the bounded derived category of constructible $\ql$-complexes on $\cX$ (see \cite[Rem. 3.21]{laszlo-olsson-adic-sheaves}). We denote by $\Sh_c(\cX, \ql)$ the abelian category of constructible $\ql$-sheaves on $\cX$; this is the heart of the standard $t$-structure on $D^b_c(\cX, \ql)$.
    \end{notn}

    Given an object in $D^{b}_c(\cX, \ql)$, we denote by $R\Gamma(\cX, \ql)$ the complex of derived global sections, which is an object in the bounded-below derived category $D^{+}(\ql)$ of $\ql$-vector spaces.

 If the ground field $k$ is $\bC$, then we use the following:
\begin{notn}[Derived category of mixed Hodge modules] Let $D_{H}^{+}(\cX)$ denote the bounded-below derived category of mixed Hodge modules constructed in \cite{tubach_mixed_hodge}. We denote by $D_{H,c}^{+}(\cX)$ the bounded-below derived category of cohomologically constructible mixed Hodge modules.
\end{notn} 
     
Pushing forward under the structure morphism $\cX \to \Spec(\bC)$ preserves constructibility of mixed Hodge modules. For any element $E \in D^{+}_{H,c}(\cX)$, we denote by $R\Gamma(\cX, E)$ the corresponding pushforward in $D^{+}_{H,c}(\Spec(\bC))$. There is a notion of weights for objects of $D_{H,c}(\Spec(\bC))$ (cf. \cite[Defn. 3.17]{tubach_mixed_hodge}), and therefore there is a notion of purity for complexes. We say that a complex in $D_{H,c}(\Spec(\bC))$ is pure if it is pure of weight $0$.

\end{subsection}
\end{section}

\begin{section}{Some general results} \label{section: general results}

\begin{subsection}{Cohomology of equivariant sheaves}
In this subsection, we work in the following context.
\begin{context} \label{context: A^1 retraction}
    Let $i : \cZ \hookrightarrow \cX$ be a closed immersion of quasi-separated algebraic stacks that are locally of finite type over $k$. Suppose that we have a morphism $H: \bA^1 \times \cX \to \cX$ such that $H|_{1 \times \cX}: \cX \to \cX$ is the identity and both restrictions $H|_{0 \times \cX}: \cX \to \cX$ and $H|_{\bA^1 \times \cZ}: \bA^1 \times \cZ \to \cX$ factor through $\cZ \subset \cX$.
\end{context}

        \begin{prop} \label{prop: A^1 retraction l-adic}
         In \Cref{context: A^1 retraction}, let $E \in D^b_c(\cX, \ql)$ be a complex of sheaves satisfying $H^*E \cong p_2^*E$, where $p_2: \bA^1 \times \cX \to \cX$ denotes the second projection. Then, the natural morphism of derived global sections $i^*: R\Gamma(\cX, E) \to R\Gamma(\cZ, i^*E)$ is an isomorphism in the derived category of $\ql$-vector spaces.
    \end{prop}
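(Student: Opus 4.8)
The plan is to reduce the statement to the homotopy invariance of cohomology under the $\mathbb{A}^1$-action encoded by $H$. First I would consider the restriction $H^* E \cong p_2^* E$ on $\mathbb{A}^1 \times \cX$ and push forward along $p_2$. Using the projection formula together with the fact that $R(p_2)_* p_2^* \ql \cong \ql$ on $\cX$ (this is the $\mathbb{A}^1$-invariance of cohomology for the affine line, which holds for $\ell$-adic sheaves since $\ell \neq \operatorname{char} k$, applied Zariski-locally and then globally on the stack $\cX$), one gets $R(p_2)_* H^* E \cong E$. On the other hand, taking derived global sections over $\mathbb{A}^1 \times \cX$ factors as first pushing to $\cX$ via $p_2$ and then to the point; so $R\Gamma(\mathbb{A}^1 \times \cX, H^* E) \cong R\Gamma(\cX, E)$.

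Next I would compare the restrictions of $H$ to the two sections $\{0\} \times \cX \hookrightarrow \mathbb{A}^1 \times \cX$ and $\{1\} \times \cX \hookrightarrow \mathbb{A}^1 \times \cX$. Pulling back the isomorphism $R\Gamma(\mathbb{A}^1 \times \cX, H^* E) \xrightarrow{\sim} R\Gamma(\cX, E)$ along each section is compatible with the corresponding restriction maps on global sections; since both $\{0\}$ and $\{1\}$ are sections of $p_2$, restricting $R(p_2)_* H^* E \cong E$ along either one recovers the identity on $R\Gamma(\cX, E)$. Concretely, the composite $R\Gamma(\cX, E) \to R\Gamma(\mathbb{A}^1 \times \cX, H^*E) \to R\Gamma(\cX, (H|_{\{j\}\times\cX})^*E)$ is, under the identification $H^*E \cong p_2^*E$, just the identity for $j = 1$ and equals $R\Gamma$ of the map induced by $H|_{\{0\}\times\cX}$ for $j = 0$; hence the two maps $R\Gamma(\cX, E) \to R\Gamma(\cX, (H|_{\{j\}\times\cX})^* E)$, $j=0,1$, agree after identifying their targets appropriately. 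Since $H|_{\{1\}\times\cX} = \operatorname{id}_\cX$, this shows that the map $(H|_{\{0\}\times\cX})^*: R\Gamma(\cX, E) \to R\Gamma(\cX, (H|_{\{0\}\times\cX})^*E)$ is an isomorphism.

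Finally I would exploit the factorizations in \Cref{context: A^1 retraction}: $H|_{\{0\}\times\cX}$ factors as $\cX \xrightarrow{r} \cZ \xrightarrow{i} \cX$ for some morphism $r$, and $H|_{\mathbb{A}^1\times\cZ}$ factors through $\cZ$, which upon restricting to $\{1\}\times\cZ$ shows $i \circ r \circ i = i$, i.e. $r \circ i = \operatorname{id}_\cZ$ since $i$ is a (closed) immersion, so $r$ is a retraction of $i$. Now $(H|_{\{0\}\times\cX})^* = r^* \circ i^*$ on global sections, and we have just shown this composite is an isomorphism $R\Gamma(\cX, E) \xrightarrow{\sim} R\Gamma(\cX, i_* i^* E) = R\Gamma(\cZ, i^*E)$; meanwhile $i^* \circ r^* = (r \circ i)^* = \operatorname{id}$ on $R\Gamma(\cZ, i^*E)$. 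A standard two-out-of-three / retract argument then forces both $i^*: R\Gamma(\cX, E) \to R\Gamma(\cZ, i^*E)$ and $r^*$ to be isomorphisms, which is the claim.

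The main obstacle I anticipate is making the $\mathbb{A}^1$-homotopy-invariance statement $R(p_2)_* p_2^* \cong \operatorname{id}$ precise and functorial \emph{on an algebraic stack} rather than a scheme, and checking that all the comparison maps (restriction along $\{0\}$ and $\{1\}$, the projection formula, base change) are genuinely compatible so that the final two-out-of-three argument closes up; this is bookkeeping with the six-functor formalism for $\ell$-adic sheaves on stacks (as in \cite{laszlo-olsson-adic-sheaves}), but one must be careful that $\mathbb{A}^1$-invariance passes to stacks, e.g. via smooth descent from an atlas. Everything else is formal.
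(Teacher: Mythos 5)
Your overall strategy --- homotopy invariance of $R\Gamma$ along $p_2$, comparison of the two sections at $0$ and $1$, and a retract argument --- is precisely the argument behind the reference the paper invokes for this statement (\cite[Lem.~4.2]{de2018combinatorics}; the paper gives no independent proof), and your first two paragraphs are correct, including the point that $\mathbb{A}^1$-invariance on a stack is checked by smooth descent from an atlas. However, the last paragraph contains a genuine error: \Cref{context: A^1 retraction} does \emph{not} imply $r \circ i = \operatorname{id}_{\cZ}$. The hypothesis is only that $H|_{\mathbb{A}^1 \times \cZ}$ factors through $\cZ$; restricting to $\{1\}\times\cZ$ tells you that the time-one slice of that factorization is $\operatorname{id}_{\cZ}$, but it says nothing about the time-zero slice $H|_{\{0\}\times\cZ} = i\circ r\circ i$, which is merely required to land in $\cZ$ (and does so automatically). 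A concrete counterexample: $\cX=\mathbb{A}^2$ with $H(t,(x,y))=(tx,ty)$ and $\cZ=\{y=0\}$ satisfies the Context, yet $r$ is the constant map to the origin, so $r\circ i\neq\operatorname{id}_{\cZ}$. Consequently your identity $i^*\circ r^*=\operatorname{id}$ on $R\Gamma(\cZ,i^*E)$ is unjustified as stated. (There is also a smaller bookkeeping slip: the composite $r^*\circ i^*=(i\circ r)^*$ lands in $R\Gamma(\cX,r^*i^*E)$, not in $R\Gamma(\cZ,i^*E)$.)

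The gap is fixable by the very technique you already used, so no new idea is needed. The factorization $H_{\cZ}\colon\mathbb{A}^1\times\cZ\to\cZ$ of $H|_{\mathbb{A}^1\times\cZ}$ is itself a homotopy from $\operatorname{id}_{\cZ}$ (at time $1$) to $r\circ i$ (at time $0$), and pulling back the given isomorphism $H^*E\cong p_2^*E$ along $\operatorname{id}_{\mathbb{A}^1}\times i$ yields $H_{\cZ}^*(i^*E)\cong p_2^*(i^*E)$ on $\mathbb{A}^1\times\cZ$. Running your second paragraph for the triple $(\cZ,H_{\cZ},i^*E)$ then shows that $(r\circ i)^*=i^*\circ r^*$ induces an isomorphism on $R\Gamma(\cZ,i^*E)$ (in fact the identity under the canonical identifications), even though $r\circ i$ need not be the identity morphism. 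With both composites $r^*\circ i^*$ and $i^*\circ r^*$ known to be isomorphisms, your final retract (two-out-of-six) argument closes correctly and gives that $i^*$ is an isomorphism.
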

    \begin{proof}
        This is known in the context of varieties (see \cite[Lem. 4.2]{de2018combinatorics}), and the same proof applies verbatim in the context of algebraic stacks.
    \end{proof}

    \begin{prop} \label{coroll: vanishing cohomology equivariant sheaves}
        Let $A$ be a separated scheme locally of finite type over $k$ equipped with an action of $\bG_m$. Suppose that the action morphism $a: \bG_m \times A \to A$ extends to a morphism $\widetilde{a}: \bA^1 \times A \to A$. Let $i: A^{\bG_m} \hookrightarrow A$ denote the closed immersion from the subscheme of fixed points. Then, given any $\bG_m$-equivariant constructible sheaf $F$ in $\Sh_c(A, \ql)$, we have a canonical identification $R\Gamma(A, F) \cong R\Gamma(A^{\mathbb{G}_m}, i^*F)$. In particular, if $A^{\bG_m}$ has dimension $0$, then $\Hcoh^j(A, F) =0$ for all $j >0$.
    \end{prop}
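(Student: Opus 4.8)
The idea is to apply Proposition \ref{prop: A^1 retraction l-adic} with $\cX = A$ and $\cZ = A^{\bG_m}$, so I need to produce the $\bA^1$-homotopy $H$ of Context \ref{context: A^1 retraction} and check that the equivariant sheaf $F$ satisfies $H^*F \cong p_2^*F$. The natural candidate for $H$ is the extended action map $\widetilde a : \bA^1 \times A \to A$ itself. Let me check the three conditions. First, $\widetilde a|_{1 \times A} = a|_{1 \times A} = \mathrm{id}_A$ since $a$ is an action — good. Second, $\widetilde a|_{0 \times A}$: this is the "attracting" map $A \to A$, $x \mapsto \lim_{t \to 0} t \cdot x$, and its image lands in the fixed locus $A^{\bG_m}$; this is standard (the limit point is $\bG_m$-fixed because $t' \cdot (\lim_{t\to 0} t\cdot x) = \lim_{t \to 0} (t't)\cdot x = \lim_{t\to 0} t \cdot x$, using separatedness for uniqueness of the limit). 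Third, $\widetilde a|_{\bA^1 \times A^{\bG_m}}$: for $x$ fixed, $t \cdot x = x$ for $t \in \bG_m$, so by separatedness $\widetilde a(t,x) = x$ for all $t \in \bA^1$; hence this restriction factors through $A^{\bG_m}$ (it is the projection). So Context \ref{context: A^1 retraction} is satisfied with $i : A^{\bG_m} \hookrightarrow A$.

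Next I must verify $\widetilde a^* F \cong p_2^* F$ on $\bA^1 \times A$. By definition, a $\bG_m$-equivariant structure on $F$ gives an isomorphism $a^* F \cong p_2^* F$ on $\bG_m \times A$ (satisfying a cocycle condition, though I only need the bare isomorphism here). The point is to extend this isomorphism across the divisor $\{0\} \times A$. Since $F$ is a constructible sheaf (in the heart), both $\widetilde a^* F$ and $p_2^* F$ are honest constructible sheaves on $\bA^1 \times A$; they agree on the dense open $\bG_m \times A$. To conclude they agree globally I would use that $j : \bG_m \times A \hookrightarrow \bA^1 \times A$ is an open immersion with dense image and argue via the unit map to $j_* j^*$; more robustly, I can restrict to the "limit" section. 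Actually the cleanest route: the equivariance isomorphism $\phi : a^*F \xrightarrow{\sim} p_2^*F$ on $\bG_m \times A$, when we further pull back along $\bA^1 \times A \xrightarrow{\widetilde a \times \mathrm{id}} \cdots$, no — let me instead use that for equivariant sheaves the iso $\phi$ is part of the descent data and automatically extends; in the $\ell$-adic setting one can reduce to the analogous statement over a point of $A$ and invoke that $\bA^1$ is simply connected so that a local system on $\bA^1$ trivial on $\bG_m$ is globally trivial. I expect this extension step — promoting the equivariance isomorphism on $\bG_m \times A$ to one on $\bA^1 \times A$ — to be the only real subtlety; it should follow from constructibility plus the fact that $\bA^1 \smallsetminus \bG_m$ is a Cartier divisor and a hands-on check on stalks, or by citing the standard dictionary between $\bG_m$-equivariant sheaves and sheaves satisfying monodromy-invariance along the $\bG_m$-orbits.

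Granting $\widetilde a^* F \cong p_2^* F$, Proposition \ref{prop: A^1 retraction l-adic} immediately gives that $i^* : R\Gamma(A, F) \to R\Gamma(A^{\bG_m}, i^* F)$ is an isomorphism in $D^+(\ql)$, which is the first claim. For the final sentence: if $\dim A^{\bG_m} = 0$, then $A^{\bG_m}$ is a $0$-dimensional separated scheme locally of finite type over $k$, hence a disjoint union of spectra of Artinian local $k$-algebras; its $\ell$-adic cohomology is concentrated in degree $0$ (it is just the sections of $i^*F$ over the finitely many points, with no higher cohomology). Therefore $\Hcoh^j(A, F) = \Hcoh^j(A^{\bG_m}, i^*F) = 0$ for all $j > 0$, as claimed. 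The only place separatedness of $A$ is genuinely used is in guaranteeing uniqueness of the $\bG_m$-limits, i.e. that $\widetilde a$ restricted to $\{0\} \times A$ and to $\bA^1 \times A^{\bG_m}$ behaves as described.
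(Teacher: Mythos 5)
There is a genuine gap at the step you yourself flagged as ``the only real subtlety'': the claimed isomorphism $\widetilde{a}^{\,*}F \cong p_2^*F$ on $\bA^1 \times A$ is simply false for a general $\bG_m$-equivariant constructible sheaf, so the reduction to \Cref{prop: A^1 retraction l-adic} does not go through. Concretely, take $A = \bA^1$ with the scaling action (so $\widetilde{a}$ is multiplication) and $F = j_!\ql$ for $j : \bG_m \hookrightarrow \bA^1$; this $F$ is a constructible sheaf in the heart and is $\bG_m$-equivariant (the equivariance isomorphism $a^*F \cong p_2^*F$ holds on $\bG_m \times A$ because $a^{-1}(\bG_m) = p_2^{-1}(\bG_m) = \bG_m \times \bG_m$ there). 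But on $\bA^1 \times \bA^1$ the sheaf $p_2^*F$ is the extension by zero from $\bA^1 \times \bG_m$, while $\widetilde{a}^{\,*}F$ is the extension by zero from $\{tx \neq 0\} = \bG_m \times \bG_m$; these have different supports and are not isomorphic. None of your suggested repairs (the unit map to $j_*j^*$, a stalkwise check, simple connectedness of $\bA^1$) can work, because the statement being repaired is false: an isomorphism on the dense open $\bG_m \times A$ between two constructible sheaves need not extend across the divisor $\{0\} \times A$, and here it provably does not. (The conclusion of the proposition still holds in this example --- both sides vanish --- which shows the proposition is not a formal consequence of $\bA^1$-homotopy invariance of the pair $(\widetilde a, p_2)$.)

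The paper does not argue via \Cref{prop: A^1 retraction l-adic} at all: the isomorphism $R\Gamma(A,F) \cong R\Gamma(A^{\bG_m}, i^*F)$ is quoted from \cite[Prop.~A.1]{khan2023fourier}, which is an instance of the contraction principle (in the spirit of Braden's hyperbolic localization / the Drinfeld--Gaitsgory contraction lemma): for a contracting action with attractor map $\pi : A \to A^{\bG_m}$, $x \mapsto \widetilde a(0,x)$, and a $\bG_m$-monodromic complex $F$, one has $\pi_*F \cong i^*F$, and taking global sections gives the claim. That statement genuinely uses the full equivariant (or monodromic) structure and its proof is not the na\"ive homotopy argument. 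Your treatment of the remaining points is fine: the verification that $\widetilde a$ satisfies \Cref{context: A^1 retraction} (limits land in, and fixed points stay in, $A^{\bG_m}$, using separatedness) is correct, and the final vanishing $\Hcoh^j(A^{\bG_m}, i^*F) = 0$ for $j>0$ when $\dim A^{\bG_m} = 0$ is correct, matching the paper's appeal to the vanishing theorems of \cite[Exp.~X, \S4]{sga4}. But as written the central isomorphism is unproved, and the proposed route to it cannot be completed.
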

    \begin{proof}
        The isomorphism $R\Gamma(A,F) \cong R\Gamma(A^{\mathbb{G}_m}, i^*F)$ follows from \cite[Prop. A.1]{khan2023fourier}. If $A^{\bG_m}$ has dimension $0$, then for any $j>0$ we get $\Hcoh^j(A,F) = \Hcoh^j(A^{\bG_m}, i^*F) =0$, where the last equality follows from the vanishing theorems in \cite[Exp. X, \S4]{sga4}.
    \end{proof}

    Note that \Cref{coroll: vanishing cohomology equivariant sheaves} does not follow from \Cref{prop: A^1 retraction l-adic} since the $\mathbb{G}_m$-equivariant sheaf $F$ may not satisfy $\widetilde{a}^*F\cong p_2^*F$.
    Indeed, in the example where $\widetilde{a}$ is the contracting action of $\bA^1$ on $\bA^n$, the condition $\widetilde{a}^*F\cong p_2^*F$ would force $F$ to be constant.

    If the ground field $k$ is $\bC$, then there is a version of \Cref{prop: A^1 retraction l-adic} in the context of mixed Hodge modules.
    \begin{prop} \label{prop: A^1 retraction mixed Hodge}
        In \Cref{context: A^1 retraction}, suppose that the ground field is $\mathbb{C}$, and let $E$ be an object in $D^{+}_{H,c}(\cX)$ satisfying $H^*E \cong p^*_2E$. Then, the natural morphism $i^*: R\Gamma(\cX, E) \to R\Gamma(\cZ, i^*E)$ is an isomorphism in $D_{H,c}^{+}(\Spec(\bC))$.
    \end{prop}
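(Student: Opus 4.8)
The plan is to mirror the proof of \Cref{prop: A^1 retraction l-adic}, transporting each step into the formalism of mixed Hodge modules of \cite{tubach_mixed_hodge}. The key formal inputs are: (i) the six-functor package for $D_{H,c}^{+}$ on quasi-separated algebraic stacks locally of finite type over $\bC$, including proper/smooth base change and the projection formula; (ii) the fact that $R\Gamma(\bA^1_{\bC}, -)$ applied to the constant Hodge module is the identity, equivalently that for the projection $p_2 : \bA^1 \times \cX \to \cX$ one has $p_{2*}p_2^{*} \cong \on{id}$ on $D_{H,c}^{+}(\cX)$ via the unit of adjunction (this is the Hodge-module incarnation of $\bA^1$-invariance of cohomology, which follows from the computation of the cohomology of the affine line as a mixed Hodge structure together with base change along $\cX \to \Spec(\bC)$). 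Granting these, the argument is purely diagrammatic and does not require any new geometric idea.

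First I would set up the diagram. Write $\iota_0, \iota_1 : \cX \hookrightarrow \bA^1 \times \cX$ for the inclusions at $0$ and $1$, and $j : \bA^1 \times \cZ \hookrightarrow \bA^1 \times \cX$, $i_{\cZ} : \{0\} \times \cZ \hookrightarrow \bA^1 \times \cZ$. From the hypothesis $H^*E \cong p_2^*E$ and $H \circ \iota_1 = \on{id}_{\cX}$ we recover $E$ itself by restricting along $\iota_1$; restricting instead along $\iota_0$, and using that $H \circ \iota_0$ factors through $\cZ$, gives a canonical identification of $E$ with the pullback to $\cX$ of some object on $\cZ$ — concretely, $E \cong r^{*}(i^{*}E)$ where $r : \cX \to \cZ$ is the retraction $H|_{0\times\cX}$ corestricted to $\cZ$, and $i \circ r \simeq H|_{0 \times \cX}$. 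The point is that $H$ provides an $\bA^1$-homotopy between $\on{id}_{\cX}$ and $i \circ r$, and that this homotopy is compatible, via $H|_{\bA^1 \times \cZ}$, with the identity homotopy on the image of $\cZ$.

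Next I would push everything forward to $\Spec(\bC)$. Applying $R\Gamma(\cX, -)$ to the morphism $E \to i_* i^* E$ (unit of $(i^*, i_*)$) and using $R\Gamma(\cX, i_* i^* E) \cong R\Gamma(\cZ, i^* E)$, it suffices to show the composite $R\Gamma(\cX, E) \to R\Gamma(\cZ, i^*E) \xrightarrow{r^*} R\Gamma(\cX, r^* i^* E) \cong R\Gamma(\cX, E)$ is the identity, and likewise in the other order, which is a standard consequence of $\bA^1$-invariance: the two maps $\iota_0^{*}, \iota_1^{*} : R\Gamma(\bA^1 \times \cX, p_2^{*}E) \to R\Gamma(\cX, E)$ agree because $p_{2*} p_2^{*} \cong \on{id}$ forces both to be inverse to $p_2^{*}$. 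Feeding the homotopy $H$ into this — i.e. computing $R\Gamma(\cX, H^*E) \cong R\Gamma(\bA^1\times\cX, p_2^*E)$ and restricting to the two endpoints — yields $R\Gamma(i \circ r)^{*} = R\Gamma(\on{id}_{\cX})^{*} = \on{id}$ on $R\Gamma(\cX, E)$, and the compatibility over $\cZ$ yields the reverse composite. Everything here takes place in $D_{H,c}^{+}(\Spec(\bC))$ because all the functors and natural transformations involved are morphisms of mixed Hodge modules; in particular the isomorphism is automatically weight-exact.

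The main obstacle I anticipate is not conceptual but foundational: one must check that the specific six-functor formalism of \cite{tubach_mixed_hodge} on (possibly non-quasi-compact, non-separated-diagonal) algebraic stacks supports exactly the adjunctions, base-change isomorphisms, and the $\bA^1$-invariance statement $p_{2*}p_2^{*}\cong\on{id}$ used above, and that these are compatible with the corresponding statements for $\ell$-adic sheaves under the realization functor (so that the resulting isomorphism refines the one in \Cref{prop: A^1 retraction l-adic}). Once these are in place, the proof is, as in the $\ell$-adic case, verbatim the argument of \cite[Lem. 4.2]{de2018combinatorics}, and I would simply say so rather than reproduce the diagram chase in full.
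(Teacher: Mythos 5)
Your argument is correct in outline, but it takes a genuinely different route from the paper. You propose to run the homotopy argument of \cite[Lem. 4.2]{de2018combinatorics} directly inside $D^{+}_{H,c}$: construct $r^{*}$ as an explicit inverse to $i^{*}$ using the identification $E \cong r^{*}i^{*}E$ coming from restriction along $\iota_0$, and prove $\iota_0^{*}=\iota_1^{*}$ on $R\Gamma(\bA^1\times\cX, p_2^{*}E)$ via $p_{2*}p_2^{*}\cong \on{id}$. This works, but it forces you to verify the foundational inputs you flag at the end --- adjunctions, base change, and $\bA^1$-invariance of $p_{2*}p_2^{*}$ in the mixed Hodge module formalism of \cite{tubach_mixed_hodge} on stacks --- and these are precisely the checks the paper is engineered to avoid. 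The paper instead observes that the forgetful functor $rat: D_{H,c}^{+}(\Spec(\bC)) \to D(\bQ)$ is conservative and commutes with $R\Gamma$ and $i^{*}$, so it suffices to check that $rat(i^{*})$ is an isomorphism of underlying complexes of $\bQ$-vector spaces, where the statement is exactly \Cref{prop: A^1 retraction l-adic} (i.e.\ the classical constructible-sheaf argument). In other words, the compatibility with the realization functor that you list as a final consistency check is, in the paper, the entire proof: it transfers the whole diagram chase to a setting where it is already known, rather than redoing it at the level of Hodge modules. Your approach buys a self-contained construction of the inverse within $D^{+}_{H,c}$ (and would generalize to coefficient systems with no Betti realization), while the paper's buys brevity and a much smaller foundational footprint; both yield the same isomorphism once the realization compatibilities are in place.
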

    \begin{proof}
        There is a forgetful functor $rat: D_{H,c}^{+}(\Spec(\bC)) \to D(\bQ)$ to the derived category of $\bQ$-vector spaces (where we think of the latter as the ind-completion of the bounded constructible derived category on the point). The functor $rat$ is conservative, and hence it suffices to check that $i^*: R\Gamma(\cX, E) \to R\Gamma(\cX, i^*E)$ is an isomorphism after applying $rat$. This can be proven by the same argument as in \Cref{prop: A^1 retraction l-adic}.
    \end{proof}

\end{subsection}

\begin{subsection}{Connectedness and fixed points}

\begin{defn}[{\cite[Defn. B1]{herrero2023meromorphic}}] \label{defn: contracting action}
Let $Y$ be a separated scheme locally of finite type over $k$. We say that an action of $\mathbb{G}_m$ on $Y$ is contracting if for any discrete valuation ring $R$ and morphism $f:Spec(R) \to Y$, there exists a $\mathbb{G}_m$-equivariant morphism $\widetilde{f}: \mathbb{A}^1_R \to Y$ such that $\widetilde{f}(1) = f$.
\end{defn}

Let $Y$ be separated scheme locally of finite type over $k$ which is equipped with a $\mathbb{G}_m$-action. Following \cite{drinfeld-attractor-loci}, we define the functor $Y^+$ from $k$-schemes to sets that sends $T \to S$ to the set of $\mathbb{G}_m$-equivariant morphism of $T$-schemes $\mathbb{A}^1_T \to Y_T$. The functor $Y^+$ is represented by an algebraic space locally of finite type over $k$ \cite[Prop. 1.4.1]{halpernleistner2018structure}. There are natural morphisms $\ev_0: Y^+ \to Y$ and $\ev_1: Y^+ \to Y$ defined by evaluating at $0\in \mathbb{A}^1$ and $1\in \mathbb{A}^1$ respectively. The morphism $\ev_1: Y^+ \to Y$ is a monomorphism of finite type, and so it follows that $Y^+$ is also a separated scheme locally of finite type over $k$ \cite[\href{https://stacks.math.columbia.edu/tag/03XX}{Tag 03XX}]{stacks-project}. The morphism $\ev_0: Y^+ \to Y$ factors through the closed subscheme $Y^{\mathbb{G}_m} \subset Y$ of $\mathbb{G}_m$-fixed points (denote by $Y^0$ in \cite[Prop. 1.4.1]{halpernleistner2018structure}). In terms of the morphism $\ev_1: Y^+ \to Y$, we have that the $\mathbb{G}_m$-action is contracting if and only if $\ev_1: Y^+ \to Y$ is a surjective closed immersion. In particular, if $Y$ is reduced and the $\mathbb{G}_m$-action is contracting, then $Y^+ = Y$.

\begin{prop}\label{prop: 2}
     Let $f: X \to Y$ be a proper $\mathbb{G}_m$-equivariant morphism of separated schemes that are locally of finite type $k$. Suppose that the $\mathbb{G}_m$-action on $Y$ is contracting. Then, $X$ is connected if and only if $X \times_{Y} Y^{\mathbb{G}_m}$ is connected.
\end{prop}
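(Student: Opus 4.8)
The plan is to reduce everything to the contracting structure via the evaluation morphisms, and then apply the $\bA^1$-homotopy invariance results from the previous subsection together with a Stein-type/connectedness argument at the level of fixed points. First I would set up the key commutative square: since the $\bG_m$-action on $Y$ is contracting, $\ev_1 \colon Y^+ \to Y$ is a surjective closed immersion, so $Y^+$ and $Y$ have the same underlying topological space; replacing $Y$ by $Y_{\mathrm{red}}$ (which does not change connectedness of $X$, nor of $X\times_Y Y^{\bG_m}$, since these only depend on the underlying reduced structures) we may assume $Y^+ = Y$. Now pull back the contracting family along $f$: the morphism $\widetilde{a} \colon \bA^1 \times X \to X$ does not exist a priori, but we can use the universal family over $Y^+ = Y$. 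Concretely, the universal morphism $\bA^1 \times Y \to Y$ restricts via $\ev_0$ to a morphism $\bA^1 \times Y \to Y^{\bG_m}$, and pulling back along $f$ gives us a morphism $H \colon \bA^1 \times X \to X$ together with a closed immersion $i \colon \cZ := X \times_Y Y^{\bG_m} \hookrightarrow X$ satisfying the hypotheses of \Cref{context: A^1 retraction}: $H|_{1 \times X} = \mathrm{id}_X$, and both $H|_{0 \times X}$ and $H|_{\bA^1 \times \cZ}$ factor through $\cZ$ because they factor through $f^{-1}(Y^{\bG_m})$ via properness and equivariance.

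Next I would translate connectedness into a statement about $R\Gamma$. A quasi-compact quasi-separated scheme $W$ is connected if and only if $\Hcoh^0(W, \ql) = \ql$ (assuming $W$ nonempty), i.e. the rank-one condition on degree-zero cohomology; more robustly, for possibly non-quasi-compact $W$ locally of finite type, connectedness is equivalent to $\Hcoh^0(W,\ql)$ being one-dimensional. Apply \Cref{prop: A^1 retraction l-adic} with $E = \ql_X$ the constant sheaf (which satisfies $H^* \ql_X \cong p_2^* \ql_X$ trivially): this gives $R\Gamma(X, \ql) \cong R\Gamma(\cZ, i^* \ql_X) = R\Gamma(\cZ, \ql_{\cZ})$. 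In particular $\Hcoh^0(X, \ql) \cong \Hcoh^0(\cZ, \ql)$, so $X$ is connected if and only if $\cZ = X \times_Y Y^{\bG_m}$ is connected (and $X$ is empty iff $\cZ$ is, since $\ev_1$ surjective forces $f^{-1}(Y^{\bG_m})$ nonempty whenever $X$ is — indeed every point of $X$ maps into $Y = Y^+$, whose image under $\ev_0$ lands in $Y^{\bG_m}$, giving a point of $\cZ$ over it by properness).

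The main obstacle I anticipate is verifying that the pulled-back morphism $H$ genuinely satisfies the hypotheses of \Cref{context: A^1 retraction} — in particular that $\cZ = X \times_Y Y^{\bG_m}$ is a \emph{closed} subscheme of $X$ (clear, since $Y^{\bG_m} \hookrightarrow Y$ is closed and closed immersions are stable under base change) and that $H|_{0 \times X}$ factors through $\cZ$ rather than merely through $f^{-1}(Y^{\bG_m})$ set-theoretically. This is where properness of $f$ and the precise form of the universal contraction are used: the composite $\bA^1 \times X \to \bA^1 \times Y \to Y$ (the latter being the universal family) has its restriction to $0 \times X$ landing in $Y^{\bG_m}$, hence $H|_{0\times X}$ lands in $f^{-1}(Y^{\bG_m}) = \cZ$ as schemes; similarly $H|_{\bA^1 \times \cZ}$ factors through $\cZ$ because on $\cZ$ the map to $Y$ already lands in $Y^{\bG_m}$ on which $\bG_m$ acts trivially, so the universal family is constant there. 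Once this bookkeeping is in place, the cohomological statement follows formally, and the only remaining subtlety is the reduction to the reduced case and the handling of the non-quasi-compact situation, both of which are routine.
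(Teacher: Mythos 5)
The central step of your argument --- producing a morphism $H\colon \bA^1\times X\to X$ extending the $\bG_m$-action on $X$ by ``pulling back the universal family along $f$'' --- does not work. A morphism $\bA^1\times Y\to Y$ cannot be lifted along $f$ to a morphism $\bA^1\times X\to X$ by any base-change operation, and in general no such $H$ exists even though $f$ is proper and equivariant. The simplest counterexample is $X=\mathbb{P}^1$ with the action $t\cdot[x:y]=[tx:y]$ and $Y=\Spec(k)$ with the trivial (hence contracting) action: the action morphism $\bG_m\times\mathbb{P}^1\to\mathbb{P}^1$ does not extend over $t=0$; concretely $\ev_1\colon X^+\cong\bA^1\sqcup\{\infty\}\to\mathbb{P}^1$ is a bijective monomorphism but not an isomorphism. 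So \Cref{context: A^1 retraction} cannot be instantiated with $\cX=X$, and \Cref{prop: A^1 retraction l-adic} cannot be applied to the constant sheaf on $X$ as you propose. The surrounding bookkeeping (reduction to $Y$ reduced, reformulating connectedness via $\Hcoh^0(-,\ql)$, the factorization of $H|_{0\times X}$ and $H|_{\bA^1\times\cZ}$ through $f^{-1}(Y^{\bG_m})$ \emph{conditional on $H$ existing}) is fine, but it all hangs on this nonexistent morphism; note also that properness of $f$ plays no genuine role anywhere in your argument, which is a sign that something is off.

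The repair is to put the homotopy on the base rather than on the total space, which is what the paper does. The pushforward $F=f_*\ql$ (only $R^0f_*\ql$ is needed) is a $\bG_m$-equivariant constructible sheaf on $Y$, and on $Y_{\mathrm{red}}$ the action genuinely extends to $\widetilde a\colon\bA^1\times Y\to Y$ because the action is contracting. Then \Cref{coroll: vanishing cohomology equivariant sheaves} gives $\Hcoh^0(Y,f_*\ql)\cong\Hcoh^0(Y^{\bG_m},i^*f_*\ql)$, and proper base change --- this is where properness of $f$ enters --- identifies the two sides with $\Hcoh^0(X,\ql)$ and $\Hcoh^0(X\times_Y Y^{\bG_m},\ql)$ respectively, yielding the statement. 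Your write-up already contains all the other ingredients, so the fix is localized to replacing the pulled-back contraction on $X$ by the pushed-forward equivariant sheaf on $Y$.
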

\begin{proof}
    By the discussion above, up to replacing $Y$ with its reduced subscheme, we may assume that the action $a: \bG_m \times Y \to Y$ extends to a morphism $H:\bA^1 \times Y \to Y$. 
    
    The scheme $X$ is connected if and only if we have $\Hcoh^0(X, \ql) = \ql$, and similarly $X \times_{Y} Y^{\bG_m}$ is connected if and only if $\Hcoh^0(X \times_{Y} Y^{\bG_m}, \ql) = \ql$. Therefore, it suffices to establish an isomorphism $\Hcoh^0(X, \ql) \cong \Hcoh^0(X \times_{Y} Y^{\bG_m}, \ql)$. Let $i: Y^{\bG_m} \hookrightarrow Y$ denote the closed immersion from the scheme of fixed points. By proper base change applied to the morphism $f$, we have $\Hcoh^0(X \times_{Y} Y^{\bG_m}, \ql) = \Hcoh^0(Y^{\bG_m}, i^* f_*\ql)$. Therefore, it suffices to show that the following is an isomorphism:
    \[\Hcoh^0(X, \ql) = \Hcoh^0(Y, f_*\ql) \xrightarrow{i^*} \Hcoh^0(Y^{\bG_m}, i^*f_*\ql) =\Hcoh^0(X \times_{Y} Y^{\bG_m}, \ql).\]
    This follows from \Cref{coroll: vanishing cohomology equivariant sheaves}, where we use $\cZ = Y^{\bG_m} \hookrightarrow Y = \cX$ and $F = f_*\ql$.
\end{proof}

The statement of \Cref{prop: 2} was an attempt to extract the topological content of the arguments in \cite{pauly_peon_nieto}. In particular, as a direct consequence of \Cref{prop: 2}, we may recover \cite[Thm. 1.1]{pauly_peon_nieto}. We state and prove a more general formulation of \cite[Thm. 1.1]{pauly_peon_nieto} in \Cref{subsection: very stable bundles}. 

\end{subsection}

\end{section}

\begin{section}{Applications to Non Abelian Hodge theory in positive characteristic} \label{section 3: nah positive char}

\begin{subsection}{Non Abelian Hodge Theory in positive characteristic}\label{subsec: non abelian hodge}

For this subsection, we fix the following setup.
\begin{context} \label{context: nah application} \quad 
\begin{enumerate}[(1)]
    \item Suppose that the algebraically closed ground field $k$ has characteristic $p>0$.
    \item Let $G$ be a connected reductive group over $k$ which satisfies the low height property (\cite[Defn. 2.29]{herrero2023meromorphic}).
    \item  Let $C$ be a smooth projective connected curve of genus $g \geq 2$, and let $C'$ denote the Frobenius twist of $C$.
    \item We fix a degree $d \in \pi_1(G)$.
\end{enumerate}  
\end{context}

The main varieties of interest in this subsection are the following. We refer the reader to \cite[\S2]{herrero2023meromorphic}
for detailed treatments of their constructions.
\begin{notn}[Moduli spaces]
    We denote by $\MHiggs^d(C')$ the Dolbeault moduli space of semistable $G$-Higgs bundles of degree $d$ on the Frobenius twist $C'$. 

    By a $G$-connection on $C$ we mean
    a $G$-bundle on $C$ together with a connection.
    We denote by $\Mderham^{pd}(C)$ the de Rham moduli space of semistable $G$-connections of degree $pd$ on $C$. 
\end{notn} 

We will use the corresponding Hitchin fibrations for both moduli spaces. See \cite[\S5]{herrero2023meromorphic} for more details on the construction of the following morphisms.
\begin{notn}[Hitchin morphisms]
    Let $A_{G, \omega_{C'}}$ denote the Hitchin base for the curve $C'$. We denote by $h_{Dol}: \MHiggs^d(C') \to A_{G, \omega_{C'}}$ (resp. $h_{dR}: \Mderham^{pd}(C) \to A_{G, \omega_{C'}}$) the Hitchin morphisms (resp. the de Rham-Hitchin morphism).
\end{notn}

There is a relative cup product for the morphism $h_{Dol}$ that equips $\bigoplus_{i =0}^{\infty} R^i(h_{Dol})_*\ql$ with the structure of a sheaf of graded commutative $\ql$-algebras on $A_{G, \omega_{C'}}$. Similarly, $\bigoplus_{i=0}^{\infty} R^i(h_{dR})_*\ql$ is naturally equipped with the structure of a sheaf of graded commutative $\ql$-algebras.

\begin{lemma} \label{lemma: same relative cohomology}
    With assumptions as in \Cref{context: nah application}, there is a canonical isomorphism of sheaves of graded commutative $\ql$-algebras
    \[can: \bigoplus_{i =0}^{\infty} R^i(h_{Dol})_*\ql \xrightarrow{\sim} \bigoplus_{i =0}^{\infty} R^i(h_{dR})_*\ql.\]
\end{lemma}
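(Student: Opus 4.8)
The plan is to establish this isomorphism \emph{locally} on the Hitchin base and then check that the local identifications glue, using the fact that an isomorphism of sheaves can be constructed étale-locally. The key input is the positive-characteristic Nonabelian Hodge correspondence of \cite{herrero2023meromorphic} (and the earlier \cite{dCGZ, herrero-zhang-naht}), which provides, étale-locally over $A_{G,\omega_{C'}}$, an identification of the Dolbeault moduli space with the de Rham moduli space compatible with the two Hitchin maps. More precisely, over a sufficiently small étale (or formal) neighborhood $U \to A_{G,\omega_{C'}}$, there is an isomorphism $\MHiggs^d(C')\times_{A}U \xrightarrow{\sim} \Mderham^{pd}(C)\times_A U$ over $U$, and pulling back $\ql$ along this isomorphism induces an isomorphism of the relative cohomology sheaves $R^i(h_{Dol})_*\ql|_U \xrightarrow{\sim} R^i(h_{dR})_*\ql|_U$. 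Because the isomorphism of moduli spaces is an isomorphism of varieties (not merely a cohomological statement), it automatically respects relative cup products, so the local identifications are maps of sheaves of graded $\ql$-algebras.

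First I would recall precisely which form of the local Nonabelian Hodge statement is available: over the formal (or étale) neighborhood of a point $a \in A_{G,\omega_{C'}}$, the ``abelianized'' descriptions of both Hitchin fibers via spectral/cameral data coincide after the Frobenius twist, and the comparison is realized by an actual isomorphism of the restricted moduli spaces over the base. I would cite the relevant statement in \cite{herrero2023meromorphic} (the local version of the correspondence, which upgrades \cite{dCGZ, herrero-zhang-naht}) and extract from it the induced isomorphism $can_U$ of relative cohomology sheaves over $U$, noting that it is a morphism of sheaves of graded commutative algebras because it comes from pullback of constant sheaves along an isomorphism of schemes over $U$.

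Next I would address gluing. The subtle point is that the local isomorphism of moduli spaces need not be canonical — it may depend on choices (e.g.\ of a square root of a line bundle, or of a splitting), so a naive attempt to glue the $can_U$ directly could fail. The remedy is to observe that the \emph{induced map on cohomology sheaves} is canonical: any two choices of local isomorphism differ by an automorphism over $U$ that acts trivially on $R^\bullet h_*\ql$ (for instance because it is homotopic to the identity, or because it induces the identity on the relevant abelianized pieces), so $can_U$ is independent of choices and the collection $\{can_U\}$ descends to a well-defined global morphism $can$ of sheaves on $A_{G,\omega_{C'}}$. I would then note that $can$ is an isomorphism since it is so étale-locally, and it is a map of sheaves of graded commutative $\ql$-algebras since each $can_U$ is.

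I expect the main obstacle to be precisely this canonicity/gluing step: one must pin down that the locally-defined isomorphisms on cohomology sheaves are genuinely independent of the auxiliary choices made in the local Nonabelian Hodge construction, so that they patch. This is where the work lies; the existence of the local isomorphisms and their compatibility with cup product are essentially formal once one has the correspondence of \cite{herrero2023meromorphic} in hand. (An alternative route, if the canonicity of $can_U$ is not directly available from \emph{loc.\ cit.}, would be to compare both sides with a common ``abelianized'' object — the pushforward from the relevant Picard/Prym stack over the cameral cover — and deduce the identification functorially; this avoids choices but requires more bookkeeping with the cameral description.)
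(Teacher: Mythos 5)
Your proposal is correct and follows essentially the same route as the paper: the paper simply cites the already-constructed canonical isomorphism $can$ of graded sheaves from \cite[Thm. 4.15(4)]{herrero-zhang-naht} (whose construction and gluing is exactly what you sketch, with the canonicity issue you flag being resolved there by the Homotopy Lemma \cite[Lem. 4.13]{herrero-zhang-naht}, since two local trivializations differ by a section of a group scheme with connected fibers), and then verifies cup-product compatibility \'etale-locally, where $can$ is induced by an isomorphism of moduli spaces --- precisely your argument.
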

\begin{proof}
    We use the canonical isomorphism $can: \bigoplus_{i =0}^{\infty} R^i(h_{Dol})_*\ql \xrightarrow{\sim} \bigoplus_{i =0}^{\infty} R^i(h_{dR})_*\ql$ of graded $\ql$-sheaves constructed in \cite[Thm. 4.15(4)]{herrero-zhang-naht}. We need to check that $can$ is compatible with the algebra structures. Since the formation of the sheaves of graded commutative algebras is \'etale local on the base $A_{G, \omega_{C'}}$, we may check the compatibility of $can$ \'etale locally on $A_{G,\omega_{C'}}$. But, after passing to an \'etale cover of the base, the morphism $can$ is induced, by construction, from an isomorphism of the moduli spaces, and therefore it is compatible with the algebra structures induced by cup products.
\end{proof}

\begin{thm} \label{thm: main thm}
    With assumptions as in \Cref{context: nah application}, there is a canonical isomorphism of cohomology rings: 
    \begin{equation}
    \label{eq: main thm}
        \Hcoh^{\bullet}(\MHiggs^d(C'), \ql) \cong \Hcoh^{\bullet}(\Mderham^{pd}(C), \ql).
    \end{equation}
\end{thm}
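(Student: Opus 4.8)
The plan is to upgrade the isomorphism of sheaves of graded algebras from \Cref{lemma: same relative cohomology} to an isomorphism on global cohomology by taking derived global sections over the Hitchin base $A_{G,\omega_{C'}}$, and then to replace the two Leray spectral sequences by honest computations of $R\Gamma$. The key observation is that $A_{G,\omega_{C'}}$ carries a natural $\bG_m$-action (scaling the Higgs field / the twisting), this action is contracting, and the fixed locus is a single point (the origin, corresponding to the nilpotent cone). Both Hitchin morphisms $h_{Dol}$ and $h_{dR}$ are $\bG_m$-equivariant and proper. So the strategy is: first establish that the $\bG_m$-action on $A_{G,\omega_{C'}}$ extends from an action to a morphism $\bA^1 \times A_{G,\omega_{C'}} \to A_{G,\omega_{C'}}$ (indeed the scaling action is literally defined by such a morphism, after passing to the reduced subscheme if needed), and that the pushforwards $K_{Dol} := (h_{Dol})_*\ql$ and $K_{dR} := (h_{dR})_*\ql$ are $\bG_m$-equivariant complexes on $A_{G,\omega_{C'}}$.

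Next I would invoke \Cref{coroll: vanishing cohomology equivariant sheaves} (or rather the stalk-at-the-origin statement underlying it, coming from \cite[Prop. A.1]{khan2023fourier}): for a $\bG_m$-equivariant complex on a scheme with contracting action and $0$-dimensional fixed locus, the derived global sections are computed by the costalk/stalk at the fixed point. Concretely, applying the cohomology-sheaf version, for each perverse or ordinary cohomology sheaf $R^i(h_{Dol})_*\ql$ — which is $\bG_m$-equivariant on $A_{G,\omega_{C'}}$ — we get $R\Gamma(A_{G,\omega_{C'}}, R^i(h_{Dol})_*\ql) \cong (R^i(h_{Dol})_*\ql)_0$, the stalk at the origin, and likewise on the de Rham side. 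Assembling these over all $i$ via the (degenerating-enough, or at least functorial) Leray spectral sequence, and using that $can$ is an isomorphism of sheaves of graded algebras that is in particular $\bG_m$-equivariant, one obtains a ring isomorphism $\Hcoh^\bullet(\MHiggs^d(C'),\ql) \cong \Hcoh^\bullet(\Mderham^{pd}(C),\ql)$. Multiplicativity is automatic because $can$ respects the relative cup product and the stalk-at-$0$ functor (being pullback along $\{0\} \hookrightarrow A_{G,\omega_{C'}}$ followed by the identification with $R\Gamma$) is monoidal for cup products.

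The main obstacle I expect is the handling of the spectral sequence: one needs the identification $R\Gamma(A_{G,\omega_{C'}}, K) \cong i_0^* K$ at the level of complexes, compatibly with the multiplicative structure on $K = K_{Dol}$ (resp. $K_{dR}$), not just on associated graded pieces. The cleanest route is to apply \Cref{prop: A^1 retraction l-adic} directly with $\cX = A_{G,\omega_{C'}}$, $\cZ = \{0\}$, $H = \widetilde{a}$ the extended scaling map, and $E = K_{Dol}$: the hypothesis $H^*E \cong p_2^*E$ is exactly $\bG_m$-equivariance of $E$, and \Cref{prop: A^1 retraction l-adic} then gives $i_0^*\colon R\Gamma(A_{G,\omega_{C'}}, K_{Dol}) \xrightarrow{\sim} R\Gamma(A_{G,\omega_{C'}}, i_0^*K_{Dol}) = (K_{Dol})_0$, an isomorphism of complexes, hence (being induced by the $!$-or-$*$ restriction which is lax monoidal, in fact the restriction $i_0^*$ is monoidal) an isomorphism of algebras. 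The same applies to $K_{dR}$. Finally, $i_0^*(can)\colon (K_{Dol})_0 \xrightarrow{\sim} (K_{dR})_0$ is a ring isomorphism by \Cref{lemma: same relative cohomology}, and chaining the three isomorphisms yields \eqref{eq: main thm}. A secondary point requiring care is checking that the extended scaling morphism $\widetilde{a}$ really exists on (the reduction of) $A_{G,\omega_{C'}}$ — but $A_{G,\omega_{C'}}$ is an affine space with the standard grading-by-positive-weights $\bG_m$-action, for which $\widetilde{a}(t,v) = t\cdot v$ extends tautologically to $t = 0$, so this is immediate, and similarly the equivariance of $h_{Dol}$, $h_{dR}$ and hence of $K_{Dol}$, $K_{dR}$ is part of the construction in \cite{herrero2023meromorphic, herrero-zhang-naht}.
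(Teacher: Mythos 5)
Your proposal is correct and follows essentially the same route as the paper: $\bG_m$-equivariance of the sheaves $R^i(h_{Dol})_*\ql$ and $R^i(h_{dR})_*\ql$ over the contracting Hitchin base with its unique fixed point forces the vanishing of higher cohomology over $A_{G,\omega_{C'}}$ via \Cref{coroll: vanishing cohomology equivariant sheaves}, the Leray spectral sequence collapses to a single column (which is what makes the edge isomorphism multiplicative), and the isomorphism $can$ of sheaves of graded algebras from \Cref{lemma: same relative cohomology} finishes the argument. One minor caution about your alternative ``cleanest route'': $can$ is only constructed as an isomorphism of the graded cohomology sheaves $\bigoplus_i R^i(h_{Dol})_*\ql \cong \bigoplus_i R^i(h_{dR})_*\ql$, not of the full complexes $(h_{Dol})_*\ql$ and $(h_{dR})_*\ql$, so the complex-level variant would need more input than \Cref{lemma: same relative cohomology} provides; the paper accordingly argues through the single-column spectral sequence exactly as in your primary argument.
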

\begin{proof}
    The Hitchin morphism $h_{Dol}: \MHiggs^d(C') \to A_{G, \omega_{C'}}$ is equivariant with respect to certain scaling $\mathbb{G}_m$-actions such that the action on $A_{G, \omega_{C'}}$ is contracting and has a unique fixed point $\mathfrak{o} \in A_{G, \omega_{C'}}(k)$. For every $i \geq 0$, the higher direct image $R^i(h_{Dol})_*\ql$ is a constructible $\bG_m$-equivariant $\ql$-sheaf on $A_{G, \omega_{C'}}$. Therefore, by \Cref{coroll: vanishing cohomology equivariant sheaves}, we have $\Hcoh^j(A_{G, \omega_{C'}}, R^i(h_{Dol})_*\ql) =0$ for all $j>0$. This implies that the $E_2$-page of the Leray spectral sequence for the composition $\MHiggs^d(C') \to A_{G, \omega_{C'}} \to \Spec(k)$ is concentrated in a single column, and hence we get a canonical isomorphism of $\ql$-algebras
    \begin{equation}
    \label{eq: iso on dol coh}
        \Hcoh^{\bullet}(\MHiggs^d(C'), \ql) \cong \bigoplus_{i =0}^{\infty} \Hcoh^{0}(A_{G,\omega_{C'}}, R^i(h_{Dol})_*\ql),
    \end{equation} 
    where the algebra structure on the right-hand side is induced from the algebra structure on the sheaf $\bigoplus_{i =0}^{\infty} R^i(h_{Dol})_*\ql$.

    Since $R^i(h_{Dol})_*\ql \cong R^i(h_{dR})_*\ql$ for all $i \geq 0$, we also have $\Hcoh^j(A_{G, \omega_{C'}}, R^i(h_{dR})_*\ql) =0$ for all $j>0$. Therefore, we have a similar canonical isomorphism of $\ql$-algebras induced by the Leray spectral sequence
    \begin{equation}
    \label{eq: iso on dr coh}
        \Hcoh^{\bullet}(\Mderham^{pd}(C), \ql) \cong \bigoplus_{i =0}^{\infty} \Hcoh^{0}(A_{G,\omega_{C'}}, R^i(h_{dR})_*\ql).
    \end{equation}
    Using the canonical isomorphism $can$ of sheaves of algebras from \Cref{lemma: same relative cohomology}, we obtain our desired chain of canonical identifications of $\ql$-algebras:
    \begin{gather*}
        \Hcoh^{\bullet}(\MHiggs^d(C'), \ql) \cong \bigoplus_{i =0}^{\infty} \Hcoh^{0}\left(A_{G,\omega_{C'}}, R^i(h_{Dol})_*\ql\right) \cong \bigoplus_{i =0}^{\infty} \Hcoh^{0}\left(A_{G,\omega_{C'}}, R^i(h_{dR})_*\ql\right) \cong \Hcoh^{\bullet}(\Mderham^{pd}(C), \ql).
    \end{gather*}
\end{proof}
\end{subsection}

\begin{subsection}{The cohomology of the nilpotent cones}\label{sec: coh nilp}

We keep the notation in \Cref{subsec: non abelian hodge}.
\begin{notn}[Nilpotent cones]
Let $\mathfrak{o}$ be the origin of the Hitchin base $A_{G,\omega_{C'}}$.
    We denote by $\NHiggs^d(C')$ (resp. $\Nderham^{pd}(C)$) the fiber $h_{Dol}^{-1}(\mathfrak{o})$ (resp. $h_{dR}^{-1}(\mathfrak{o})$).
\end{notn}
The semistable Nonabelian Hodge correspondence in \cite[Thm. 4.10]{herrero-zhang-naht} is an $A_{G,\omega_{C'}}$-isomorphism of the form:
\begin{equation}
    \label{eq: semistable naht}\mathbb{H}^o\times^{\mathbb{P}^o}\MHiggs^d(C')\xrightarrow{\sim}\Mderham^{pd}(C),
\end{equation}
where $\mathbb{P}^o$ is a smooth group scheme over $A_{G,\omega_{C'}}$ with connected geometric fibers acting on $\MHiggs^d(C')$, and $\mathbb{H}^o$ is a $\mathbb{P}^o$-torsor over $A_{G,\omega_{C'}}$.

Any given trivialization of the fiber of the torsor $\mathbb{H}^o|_{\mathfrak{o}}$ induces an isomorphism of schemes $\NHiggs^d(C')\xrightarrow{\sim}\Nderham^{pd}(C)$.
 (Note that \cite{ogus2007nonabelian} shows that any $W_2(k)$-lift of $C$ induces such a trivialization).
Since the group scheme $\mathbb{P}^o|_{\mathfrak{o}}$ is connected, the Homotopy Lemma \cite[Lem. 4.13]{herrero-zhang-naht} entails that the induced isomorphism of cohomologies $\Hcoh^{\bullet}(\NHiggs^d(C'))\xrightarrow{\sim} \Hcoh^{\bullet}(\Nderham^{pd}(C))$ is independent of the choice of trivialization of $\mathbb{H}^o|_{\mathfrak{o}}$.
Therefore, the isomorphism \eqref{eq: semistable naht} induces a canonical identification of cohomology rings
\begin{equation}
    \label{eq: iso on ns}
    \Hcoh^{\bullet}(\NHiggs^d(C'), \ql)\xrightarrow{\sim} \Hcoh^{\bullet}(\Nderham^{pd}(C), \ql).
\end{equation}

\begin{thm}
\label{thm: comm diag}
    With assumptions as in \Cref{context: nah application}, we have the following commutative diagram of isomorphisms of cohomology rings:
    \begin{equation}
    \label{diag: iso cones}
        \xymatrix{
        \Hcoh^{\bullet}(\MHiggs^{d}(C'), \ql)\ar[r]^-{\eqref{eq: main thm}}_-{\cong} \ar[d]_-{\cong}
        & 
        \Hcoh^{\bullet}(\Mderham^{pd}(C),\ql) \ar[d]^-{\cong}\\
        \Hcoh^{\bullet}(\NHiggs^{d}(C'), \ql) \ar[r]_-{\eqref{eq: iso on ns}}^-{\cong} 
        &
        \Hcoh^{\bullet}(\Nderham^{pd}(C),\ql),
        }
    \end{equation}
    where both vertical morphisms are induced by the restrictions.
\end{thm}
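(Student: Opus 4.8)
The plan is to exhibit both vertical restriction maps in \eqref{diag: iso cones} as composites that factor through the stalk at the origin $\mathfrak{o}$ of the relative cohomology sheaves, and then to reduce the commutativity of \eqref{diag: iso cones} to the compatibility of the stalk of $can$ at $\mathfrak{o}$ with the isomorphism \eqref{eq: iso on ns}. Consider the Cartesian square given by the inclusion $\NHiggs^d(C') = h_{Dol}^{-1}(\mathfrak{o}) \hookrightarrow \MHiggs^d(C')$ lying over $\{\mathfrak{o}\} \hookrightarrow A_{G, \omega_{C'}}$. Functoriality of the Leray spectral sequence of $h_{Dol}$ under this base change, together with proper base change along $h_{Dol}$, identifies the restriction map $\Hcoh^{\bullet}(\MHiggs^d(C')) \to \Hcoh^{\bullet}(\NHiggs^d(C'))$ with the composite
\begin{multline*}
\Hcoh^{\bullet}(\MHiggs^d(C')) \;\xrightarrow{\ \sim\ }\; \bigoplus_{i} \Hcoh^0\!\left(A_{G, \omega_{C'}},\, R^i(h_{Dol})_*\ql\right) \\ \;\xrightarrow{\ \mathrm{ev}_{\mathfrak{o}}\ }\; \bigoplus_{i} \left(R^i(h_{Dol})_*\ql\right)_{\mathfrak{o}} \;\xrightarrow{\ \sim\ }\; \bigoplus_{i} \Hcoh^i(\NHiggs^d(C')) = \Hcoh^{\bullet}(\NHiggs^d(C')),
\end{multline*}
where the first arrow is the ring isomorphism \eqref{eq: iso on dol coh} from the proof of \Cref{thm: main thm}, the middle arrow $\mathrm{ev}_{\mathfrak{o}}$ restricts a global section of a sheaf to its stalk at $\mathfrak{o}$, and the last arrow is proper base change. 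All three maps are ring homomorphisms: for \eqref{eq: iso on dol coh} this is part of the statement of \Cref{thm: main thm}; $\mathrm{ev}_{\mathfrak{o}}$ is the stalk map of the morphism of sheaves of graded commutative algebras $\bigoplus_i R^i(h_{Dol})_*\ql \to \bigl(\bigoplus_i R^i(h_{Dol})_*\ql\bigr)_{\mathfrak{o}}$; and proper base change is compatible with cup product. Furthermore $\mathrm{ev}_{\mathfrak{o}}$ is an \emph{isomorphism}, by \Cref{coroll: vanishing cohomology equivariant sheaves} applied to each $\bG_m$-equivariant sheaf $R^i(h_{Dol})_*\ql$ (using that the contracting $\bG_m$-action on $A_{G,\omega_{C'}}$ has $A_{G,\omega_{C'}}^{\bG_m} = \{\mathfrak{o}\}$). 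The same discussion applies verbatim to $h_{dR}$, with \eqref{eq: iso on dr coh} replacing \eqref{eq: iso on dol coh} and using that $h_{dR}$ is proper; the only change is that $\mathrm{ev}_{\mathfrak{o}}$ is seen to be an isomorphism by transporting the Dolbeault statement along the sheaf isomorphism $can$ of \Cref{lemma: same relative cohomology}.

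By construction (the proof of \Cref{thm: main thm}), the top arrow of \eqref{diag: iso cones} equals $\eqref{eq: iso on dr coh}^{-1} \circ can \circ \eqref{eq: iso on dol coh}$. Substituting this and the two composite descriptions of the vertical maps into \eqref{diag: iso cones} and cancelling the isomorphism \eqref{eq: iso on dol coh}, the commutativity of \eqref{diag: iso cones} becomes equivalent to the identity of ring maps
\[
\mathrm{PBC} \circ \mathrm{ev}_{\mathfrak{o}} \circ can \;=\; \eqref{eq: iso on ns} \circ \mathrm{PBC} \circ \mathrm{ev}_{\mathfrak{o}} \colon \bigoplus_{i} \Hcoh^0\!\left(A_{G,\omega_{C'}}, R^i(h_{Dol})_*\ql\right) \longrightarrow \Hcoh^{\bullet}(\Nderham^{pd}(C)).
\]
Because $\mathrm{ev}_{\mathfrak{o}}$ is natural in the sheaf, $\mathrm{ev}_{\mathfrak{o}} \circ can = can_{\mathfrak{o}} \circ \mathrm{ev}_{\mathfrak{o}}$, where $can_{\mathfrak{o}} \colon \bigl(R^i(h_{Dol})_*\ql\bigr)_{\mathfrak{o}} \to \bigl(R^i(h_{dR})_*\ql\bigr)_{\mathfrak{o}}$ denotes the stalk of $can$ at $\mathfrak{o}$. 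Since $\mathrm{ev}_{\mathfrak{o}}$ is an isomorphism, it therefore suffices to prove that $\mathrm{PBC} \circ can_{\mathfrak{o}} = \eqref{eq: iso on ns} \circ \mathrm{PBC}$; equivalently, that under the proper base change isomorphisms $\bigl(R^i(h_{Dol})_*\ql\bigr)_{\mathfrak{o}} \cong \Hcoh^i(\NHiggs^d(C'))$ and $\bigl(R^i(h_{dR})_*\ql\bigr)_{\mathfrak{o}} \cong \Hcoh^i(\Nderham^{pd}(C))$, the stalk $can_{\mathfrak{o}}$ is carried to the canonical isomorphism \eqref{eq: iso on ns}.

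This last point is the heart of the matter, and the main work it requires is unwinding the construction of $can$ in \cite[Thm. 4.15(4)]{herrero-zhang-naht}. By that construction, $can$ is built étale-locally on $A_{G,\omega_{C'}}$: over an étale chart $\pi \colon U \to A_{G,\omega_{C'}}$ on which the $\mathbb{P}^o$-torsor $\mathbb{H}^o$ is trivialized, the semistable Nonabelian Hodge correspondence \eqref{eq: semistable naht} yields an isomorphism of $U$-schemes $\MHiggs^d(C')|_U \xrightarrow{\sim} \Mderham^{pd}(C)|_U$, and $\pi^{*}can$ is $\bigoplus_i R^i(-)_*\ql$ applied to it. Choose $u \in U$ over $\mathfrak{o}$; since $\pi$ is étale and $k$ is algebraically closed, the fiber of $\MHiggs^d(C')|_U$ over $u$ is canonically $\NHiggs^d(C')$, and the fiber over $u$ of the $U$-isomorphism above is exactly the isomorphism $\NHiggs^d(C') \xrightarrow{\sim} \Nderham^{pd}(C)$ induced by the trivialization of $\mathbb{H}^o|_{\mathfrak{o}}$ obtained by restricting the chosen trivialization of $\mathbb{H}^o|_U$ to $u$. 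Passing to stalks at $u$ — which computes $can_{\mathfrak{o}}$, as $\pi$ is étale — and applying proper base change, one finds that $\mathrm{PBC} \circ can_{\mathfrak{o}}$ is $\bigoplus_i \Hcoh^i$ of this fiber isomorphism. By the Homotopy Lemma \cite[Lem. 4.13]{herrero-zhang-naht}, which applies because $\mathbb{P}^o|_{\mathfrak{o}}$ is connected, the induced map on cohomology does not depend on the choice of trivialization, and hence equals \eqref{eq: iso on ns}. The principal obstacle is thus the bookkeeping in this last paragraph: one must verify carefully that the étale-local isomorphisms of moduli spaces underlying $can$ restrict over $\mathfrak{o}$ to the isomorphisms of nilpotent cones underlying \eqref{eq: iso on ns}, compatibly with the proper base change identifications. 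Granting this, the reductions of the previous two paragraphs complete the proof.
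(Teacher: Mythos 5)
Your proposal is correct and follows essentially the same route as the paper's proof: both reduce the vertical maps to the stalk (equivalently $\Hcoh^0(\mathfrak{o}, i_{\mathfrak{o}}^*(-))$) of the relative cohomology sheaves using \Cref{coroll: vanishing cohomology equivariant sheaves} together with proper base change, and then identify the stalk of $can$ at $\mathfrak{o}$ with \eqref{eq: iso on ns} by unwinding the \'etale-local construction of $can$ from trivializations of $\mathbb{H}^o$ and invoking the Homotopy Lemma. Your write-up is somewhat more explicit about the Leray spectral sequence functoriality and the ring-map bookkeeping, but the substance is identical.
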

\begin{proof}
     Applying \Cref{coroll: vanishing cohomology equivariant sheaves} to the $\mathbb{G}_m$-equivariant sheaf $R^i(h_{Dol})_*\ql$ on $A_{G,\omega_{C'}}$, we obtain a canonical isomorphism of $\ql$-algebras
     \begin{equation}
     \label{eq: restr on dol}
         \bigoplus_{i=0}^{\infty}\Hcoh^0(A_{G,\omega_{C'}},R^i(h_{Dol})_*\ql)\cong \bigoplus_{i=0}^{\infty} \Hcoh^0(\mathfrak{o}, i_{\mathfrak{o}}^*R^i(h_{Dol})_*\ql),
     \end{equation}
     where $i_{\mathfrak{o}}: \mathfrak{o}\hookrightarrow A_{G,\omega_{C'}}$ is the closed immersion of the origin.
Combining \eqref{eq: iso on dol coh}, \eqref{eq: restr on dol} and proper base change, we see that the left vertical restriction morphism in \eqref{diag: iso cones} is an isomorphism of cohomology rings.
    By \Cref{lemma: same relative cohomology}, \eqref{eq: restr on dol}, and \eqref{eq: iso on dr coh}, we have that the right vertical restriction morphism in \eqref{diag: iso cones} is also an isomorphism of cohomology rings.

    To show that \eqref{diag: iso cones} is commutative, it suffices to prove that the isomorphism \eqref{eq: iso on ns} is identified with the isomorphism $\Hcoh^0(i_{\mathfrak{o}}^*can)$, where the morphism $can$ is as in \Cref{lemma: same relative cohomology}.
    The morphism $can$ is obtained in the following way: we choose \'etale local trivializations of the $\mathbb{P}^o$-torsor $\mathbb{H}^o$ over $A_{G,\omega_{C'}}$, which induces \'etale local isomorphisms of the form $R^i(h_{Dol})_*\ql\xrightarrow{\sim} R^i(h_{dR})_*\ql$. 
    We then use the Homotopy Lemma \cite[Lem. 4.13]{herrero-zhang-naht} to show that those \'etale local isomorphisms are independent of the trivializations of $\mathbb{H}^o$, and thus glue to a global isomorphism $can$.
    Therefore, up to the proper base change isomorphism, the morphism $\Hcoh^0(i_{\mathfrak{o}}^*can)$ is given by an isomorphism of schemes $\NHiggs^d(C')\xrightarrow{\sim} \Nderham^{pd}(C)$ induced by \eqref{eq: semistable naht} and a trivialization of $\mathbb{H}^o|_{\mathfrak{o}}$.
    By the discussion above \Cref{thm: comm diag}, we see that $\Hcoh^0(i_{\mathfrak{o}}^*can)$ coincides with \eqref{eq: iso on ns} up to the proper base change isomorphism. 
\end{proof}

\begin{remark}\label{mistake}
    
 In the case when $G=\mathrm{GL}_n$ and the degree $d \in \pi_1(\mathrm{GL}_n) \cong \mathbb{Z}$ is coprime to $n$, the isomorphisms in \Cref{thm: comm diag} recovers \cite[Thm. 2.4]{cohomological-nah}, which is one of the main results in that paper. We take this opportunity to remark that there is an gap in the proof \textit{loc. cit}, and our method above supplies the correct proof:
Let us use the notation in \textit{loc. cit.}
The main theorem, which is called the Cohomological Simpson Correspondence, is proved twice: first in Theorem 2.1 and then in Theorem 3.6. 
The latter is a refinement of the former, with a different proof. 
The proof of Theorem 2.1 contains a gap: we cannot deduce (22), which says $h_{dR,*} \oql \cong h_{Hod,0,*}\oql$, from (17), which says that $h_{dR}\times \mathrm{id}_{\bG_m}= h_{Hod}|_{\bG_m}$, and (21), which identifies the two terms in (17) with the nearby cycles of $(h_{dR}\times\mathrm{id}_{\bG_m})_*\oql$ and $(h_{Hod}|_{\bG_m})_*\oql$.
The problem is that there is an implicit non-identity isomorphism on the common target of the two morphisms in (17), see \cite[Lem. 4.5]{de-Z-2022projective} for the precise statement.
We do not know if (22) is true.
Because of Theorem 3.6, the Cohomological Simpson Correspondence still holds, but the flawed proof of Theorem 2.1 affects the proof Theorem 2.4, where (22) is used.
Our result above shows that Theorem 2.4, and all the results depending on it, are still true. 

Let us also remark that the idea of using results similar to \cite[Lem. 4.2]{de2018combinatorics} to show the $E_2$-degeneration of the Leray spectral sequence as in the proof of \Cref{thm: main thm} is suggested by de Cataldo when we were trying to remedy \cite[Thm. 2.4]{cohomological-nah}. 
\end{remark}

\end{subsection}

\section{Further Applications}

In this section, we use our study of the topology of $\mathbb{G}_m$-actions in \Cref{section: general results} to obtain a result on the cohomology of the moduli stack of $G$-Higgs bundles and a result on very stable $G$-bundles.
We believe that these results are of independent interest.

\begin{subsection}{The cohomology of the stack of $G$-Hitchin pairs}

In this subsection, we study the cohomology of the whole moduli stack of $G$-Higgs bundles without imposing any form of semistability (as opposed to the case of the moduli spaces considered in \Cref{section 3: nah positive char}). We place ourselves in the following
\begin{context} \label{context: application coh bun = higgs }
    Fix a connected reductive group $G$ and a smooth connected projective curve $C$ over the algebraically closed ground field $k$. We fix a degree $d \in \pi_1(G)$ and a line bundle $\cL$.
\end{context}

\begin{notn}
    Let $\Bun_G^d(C)$ denote the stack that parametrizes $G$-bundles of degree $d$ on $C$ (see \cite[Thm. 5.8]{hoffmann-connected-components} for the notion of degree of a $G$-bundle). 
\end{notn}

\begin{defn}[Stack of $G$-Hitchin pairs]
    We denote by $\Higgs_{G,\cL}^d(C)$ the algebraic stack that parametrizes pairs $(E, \varphi)$ consisting of a $G$-bundle $E$ of degree $d$ on $C$ and an $\cL$-twisted Higgs field $\varphi \in \Hcoh^{0}(C, \ad(E) \otimes \cL)$, where $\ad(E) := E \times^G \text{Lie}(G)$ is the adjoint vector bundle.
\end{defn}

\begin{prop} \label{prop: cohomology of Higgs = bun}
    Suppose that we have the setup as in \Cref{context: application coh bun = higgs }. Then, we have a natural isomorphism $R \Gamma(\Higgs_{G,\cL}^d(C),\ql ) \to R\Gamma(\Bun_G^d(C), \ql)$. If the ground field is $\bC$, there is an isomorphism $R\Gamma(\Higgs_{G,\cL}^d(C), \bQ) \to R\Gamma(\Bun_G^d(C), \bQ)$ in $D^{+}_{H, c}(\Spec(\bC))$.
\end{prop}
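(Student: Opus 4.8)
The plan is to realize $\Higgs_{G,\cL}^d(C)$ as the total space of a (cohomologically trivial) cone over $\Bun_G^d(C)$ and to apply the $\bA^1$-retraction results of \Cref{prop: A^1 retraction l-adic} and \Cref{prop: A^1 retraction mixed Hodge}. Concretely, the forgetful morphism $\pi: \Higgs_{G,\cL}^d(C) \to \Bun_G^d(C)$, $(E,\varphi) \mapsto E$, carries a fiberwise $\bG_m$-action given by scaling the Higgs field $t \cdot (E,\varphi) = (E, t\varphi)$, and this action extends to an $\bA^1$-action $\widetilde a: \bA^1 \times \Higgs_{G,\cL}^d(C) \to \Higgs_{G,\cL}^d(C)$, $(s, (E,\varphi)) \mapsto (E, s\varphi)$. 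The zero section $0 \times \Higgs_{G,\cL}^d(C) \to \Higgs_{G,\cL}^d(C)$ factors through the closed substack $\cZ := \Bun_G^d(C) \hookrightarrow \Higgs_{G,\cL}^d(C)$ embedded as the locus of pairs with $\varphi = 0$; and $\widetilde a$ restricted to $\bA^1 \times \cZ$ is the constant (projection) map, which also factors through $\cZ$. Thus we are precisely in \Cref{context: A^1 retraction} with $\cX = \Higgs_{G,\cL}^d(C)$, $i: \cZ = \Bun_G^d(C) \hookrightarrow \cX$, and $H = \widetilde a$.

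The one hypothesis of \Cref{prop: A^1 retraction l-adic} still to verify is that the constant sheaf is homotopy-invariant for $H$, i.e. $H^*\ql \cong p_2^*\ql$ on $\bA^1 \times \Higgs_{G,\cL}^d(C)$. But both sides are canonically the constant sheaf $\ql$ (pulled back along the structure map to a point), so this is automatic; the same remark applies to $\bQ$ in the mixed Hodge module setting, since the constant object is stable under any pullback. Therefore \Cref{prop: A^1 retraction l-adic} gives that $i^*: R\Gamma(\Higgs_{G,\cL}^d(C), \ql) \to R\Gamma(\Bun_G^d(C), i^*\ql) = R\Gamma(\Bun_G^d(C), \ql)$ is an isomorphism, and \Cref{prop: A^1 retraction mixed Hodge} gives the analogous isomorphism $R\Gamma(\Higgs_{G,\cL}^d(C), \bQ) \xrightarrow{\sim} R\Gamma(\Bun_G^d(C), \bQ)$ in $D^{+}_{H,c}(\Spec \bC)$ when $k = \bC$. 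Note that the map produced is literally the pullback $\pi^*$ along the forgetful morphism (equivalently, restriction along the zero section), so it is canonical and, being induced by a morphism of stacks, automatically a ring map — though the proposition as stated only claims an isomorphism of complexes.

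The main point requiring care, and the step I expect to be the real (if modest) obstacle, is the foundational bookkeeping that $\Higgs_{G,\cL}^d(C)$ and $\Bun_G^d(C)$ are indeed quasi-separated algebraic stacks locally of finite type over $k$, so that \Cref{context: A^1 retraction} and the cited homotopy-invariance lemma \cite[Lem. 4.2]{de2018combinatorics} (and its stacky extension invoked in the proof of \Cref{prop: A^1 retraction l-adic}) genuinely apply. For $\Bun_G^d(C)$ this is classical; for $\Higgs_{G,\cL}^d(C)$ one observes that $\pi$ is representable and affine — its fiber over $E$ is the affine space $\Hcoh^0(C, \ad(E) \otimes \cL)$, realized globally as $\pi$ being (relatively) $\Spec$ of a sheaf of symmetric algebras — so $\Higgs_{G,\cL}^d(C)$ inherits quasi-separatedness and local finite-typeness from $\Bun_G^d(C)$. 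Once this is in place, the zero section is a closed immersion and the $\bA^1$-action is manifestly algebraic, so the two propositions apply directly with no further work. A remark worth including: the argument is uniform in $G$ and $\cL$ and uses nothing about the curve beyond smoothness and properness; it is in particular independent of the characteristic and of any stability or finiteness assumptions, which is why it can be stated in the generality of \Cref{context: application coh bun = higgs }.
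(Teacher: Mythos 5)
Your argument is exactly the paper's: apply \Cref{prop: A^1 retraction l-adic} and \Cref{prop: A^1 retraction mixed Hodge} with $\cX = \Higgs_{G,\cL}^d(C)$, $\cZ = \Bun_G^d(C)$ embedded via $E \mapsto (E,0)$, and $H(t,(E,\varphi)) = (E, t\varphi)$, the constant sheaf being automatically homotopy-invariant. The additional foundational remarks (affineness of the forgetful map, canonicity of the restriction) are correct but not needed beyond what the paper records.
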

\begin{proof}
    This follows from applying \Cref{prop: A^1 retraction l-adic} and \Cref{prop: A^1 retraction mixed Hodge} where we set $\cX = \Higgs_{G,\cL}^d$ and $\cZ = \Bun_G^d(C)$, and we use the morphisms
    \[ i: \Bun_G(C)^d \hookrightarrow \Higgs_{G,\cL}^d(C), \;\;\; E \mapsto (E, 0)\]
    \[ H: \bA^1 \times \Higgs_{G,\cL}^d(C) \to \Higgs_{G,\cL}^d(C), \; \; \; (t, (E, \varphi)) \mapsto (E, t\cdot \varphi).\]
\end{proof}

\begin{coroll} \label{coroll: purity of coh of higgs}
    In \Cref{context: application coh bun = higgs }, suppose that $G$ is semisimple and that the ground field is $\mathbb{C}$. Then, the cohomology $\Hcoh^{\bullet}(\Higgs_{G,\cL}^d, \bQ)$ is pure.
\end{coroll}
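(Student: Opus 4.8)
The plan is to reduce the purity of the cohomology of the Hitchin stack to the purity of the cohomology of $\Bun_G^d(C)$, and then invoke the known purity of the latter. First I would apply the second assertion of \Cref{prop: cohomology of Higgs = bun}: over $\bC$ there is an isomorphism $R\Gamma(\Higgs_{G,\cL}^d(C), \bQ) \xrightarrow{\sim} R\Gamma(\Bun_G^d(C), \bQ)$ in $D^{+}_{H,c}(\Spec(\bC))$. Since this is an isomorphism in the derived category of mixed Hodge structures, it is in particular a filtered (indeed strict) isomorphism on weights, so it transports purity from one side to the other verbatim: $\Hcoh^\bullet(\Higgs_{G,\cL}^d, \bQ)$ is pure if and only if $\Hcoh^\bullet(\Bun_G^d(C), \bQ)$ is pure.

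It therefore remains to explain why $\Hcoh^\bullet(\Bun_G^d(C), \bQ)$ is pure. This is a classical fact: for $G$ a connected reductive group over $\bC$ and $C$ a smooth projective curve, the $\ell$-adic (or Hodge-theoretic) cohomology ring of the stack $\Bun_G^d(C)$ is pure, with $\Hcoh^{2i}$ of weight $2i$ and odd cohomology of Tate type in the appropriate weight — this goes back to Atiyah–Bott over $\bC$ in the Betti setting and to Behrend–Dhillon, Heinloth, and Schmitt in the motivic/arithmetic setting. Concretely, one can argue via the Atiyah–Bott / Harder–Narasimhan–Shatz stratification of $\Bun_G^d(C)$: the open substack $\Bun_G^{d,\mathrm{ss}}$ (or the rational homotopy type computation) together with the recursive structure exhibits $\Hcoh^\bullet(\Bun_G^d(C), \bQ)$ as a free graded-commutative algebra on generators obtained from the Künneth components of the Chern classes of the universal bundle, all of which are pure of the evident weight. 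Alternatively, and perhaps most cleanly, one cites the fact (e.g. Heinloth, \emph{or} the Tannakian uniformization $\Bun_G(C) \simeq G(F)\backslash G(\mathbb{A})/G(\mathcal{O})$ together with the Atiyah–Bott formula) that $\Hcoh^\bullet(\Bun_G^d(C),\bQ)$ is a pure Tate Hodge structure.

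The main obstacle I anticipate is purely expository rather than mathematical: pinning down a clean citation for the purity of $\Hcoh^\bullet(\Bun_G^d(C), \bQ)$ as a statement about mixed Hodge structures (not merely as a computation of Betti numbers or as an $\ell$-adic statement over a finite field). In the $G = \GL_n$ case this is completely standard; for general connected reductive $G$ one should appeal to the reduction to Levi/torus data or to the general results on cohomology of moduli of $G$-bundles. Once that citation is in place, the corollary follows immediately from \Cref{prop: cohomology of Higgs = bun} as above, since an isomorphism in $D^{+}_{H,c}(\Spec(\bC))$ automatically preserves the weight filtration and hence preserves purity.
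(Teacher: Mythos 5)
Your proposal is correct and follows exactly the paper's argument: transport purity across the isomorphism $R\Gamma(\Higgs_{G,\cL}^d(C), \bQ) \cong R\Gamma(\Bun_G^d(C), \bQ)$ from \Cref{prop: cohomology of Higgs = bun} and invoke the purity of $\Hcoh^{\bullet}(\Bun_G^d(C), \bQ)$ due to Atiyah--Bott. The extra discussion of why $\Bun_G^d(C)$ has pure cohomology is more detail than the paper gives, but the substance is the same.
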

\begin{proof}
This is a consequence of \Cref{prop: cohomology of Higgs = bun} and the fact that the cohomology $\Hcoh^{\bullet}(\Bun_G^d(C), \bQ)$ is pure by \cite{atiyah-bott}, see also \cite[Cor. 4.5]{arapura2008motive}.
\end{proof}

\begin{remark}
    If the degree of the line bundle $\cL$ satisfies $\deg(\cL) >2g-2$, where $g$ is the genus of the curve, then it was recently observed that, more surprisingly, the Borel-Moore homology of $\Higgs_{G,\cL}^d$ is also pure (see \cite[Rem. 5.6]{kinjo2024decompositiontheoremgoodmoduli}).
\end{remark}

\begin{remark}
    If we use the notion of purity in the sense of Frobenius eigenvalues, then \Cref{coroll: purity of coh of higgs} also applies in the case when $C$ is defined over a finite field and $G$ is semisimple by the main results in \cite{heinloth-schmitt}.
\end{remark}

\end{subsection}

\begin{subsection}{Very stable $G$-bundles} \label{subsection: very stable bundles}
 For this subsection, we place ourselves in the following.

\begin{context} \label{context: very stable application}
        Fix a connected reductive group $G$ and a smooth connected projective curve $C$ over the algebraically closed ground field $k$. We fix a degree $d \in \pi_1(G)$ and a line bundle $\cL$. If the field $k$ has positive characteristic, then we impose the assumption that $G$ satisfies the low height property as in \cite[Def. 2.29]{herrero2023meromorphic}.
\end{context}

\begin{notn}
    Let $\MHit^d(C)$ denote the moduli space of semistable $\cL$-twisted $G$-Hitchin pairs on $C$ (cf. \cite[Lem. 5.18]{herrero2023meromorphic}).
\end{notn} 

The scheme $\MHit^d(C)$ is of finite type over $k$ equipped with a proper Hitchin morphism $h: \MHit^d(C) \to A_{G,\cL}$ into the corresponding affine Hitchin base $A_{G,\cL}$ \cite[Cor. 6.21+Rmk. 6.22]{ahlh}.
\begin{defn}
    A stable $G$-bundle $E$ is called very stable with respect to $\cL$ if and only if its only nilpotent $\cL$-twisted Higgs field $\varphi \in \Hcoh^{0}(C,\ad(E) \otimes \cL)$ is the zero section.
\end{defn}
Given a stable $G$-bundle $E$, let $V_G$ denote the vector space $\Hcoh^{0}(C, \ad(E) \otimes \cL)$ thought of as an affine scheme. There is a locally closed immersion
\[ V_G \hookrightarrow \MHit^d(C), \; \; \; \varphi \mapsto (E, \varphi).\]
    \begin{thm}[{\cite[Thm. 1.1]{pauly_peon_nieto}}] \label{prop: very stable G-bundles}
        Suppose that we are in the setup of \Cref{context: very stable application}. Given a stable $G$-bundle $E$, the following statements are equivalent:
        \begin{enumerate}[(a)]
        \item The inclusion $V_G \hookrightarrow \MHit^d(C)$ is a closed immersion.
            \item The composition $h_{V_G}: V_G \to \MHit^d(C) \to A_{G,\cL}$ is proper.
            \item The morphism $h_{V_G}: V_G \to A_{G,\cL}$ is quasifinite.
            \item $E$ is very stable with respect to $\cL$.
        \end{enumerate}
    \end{thm}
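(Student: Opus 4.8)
The plan is to establish the cyclic chain $(a)\Rightarrow(b)\Rightarrow(c)\Rightarrow(d)\Rightarrow(a)$. The first three implications are short. For $(a)\Rightarrow(b)$: if $V_G\hookrightarrow\MHit^d(C)$ is a closed immersion, then $h_{V_G}$ is the composite of a closed immersion with the proper morphism $h$, hence proper. For $(b)\Rightarrow(c)$: both $V_G$ (an affine space) and $A_{G,\cL}$ are affine, so $h_{V_G}$ is an affine morphism, and an affine proper morphism is finite, hence quasifinite. For $(c)\Rightarrow(d)$: by the very definition of the Hitchin morphism, the scheme-theoretic fibre $h_{V_G}^{-1}(\mathfrak{o})$ over the origin $\mathfrak{o}\in A_{G,\cL}$ is the subscheme of nilpotent $\cL$-twisted Higgs fields on $E$; it is stable under the scaling action $\varphi\mapsto t\varphi$, so it is a $\bG_m$-stable closed cone in $V_G$. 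Quasifiniteness forces this fibre to be zero-dimensional, and a zero-dimensional $\bG_m$-stable closed subscheme of $V_G$ is supported at the unique fixed point $(E,0)$ of the scaling action; this says exactly that $E$ is very stable.

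The substantial implication is $(d)\Rightarrow(a)$, and this is where I would invoke \Cref{prop: 2}. Recall that $h$ is $\bG_m$-equivariant for the scaling action on Higgs fields, and that the induced action on the affine Hitchin base $A_{G,\cL}$ is contracting with the origin $\mathfrak{o}$ as its unique fixed point. First I would upgrade $(d)$ to quasifiniteness of $h_{V_G}$: by upper semicontinuity of fibre dimension the locus of $\varphi\in V_G$ at which $h_{V_G}$ has positive-dimensional fibre is closed, and it is $\bG_m$-stable because $h_{V_G}$ is equivariant; a nonempty $\bG_m$-stable closed subset of the affine space $V_G$ must contain the origin $(E,0)$, yet by $(d)$ the fibre of $h_{V_G}$ over $\mathfrak{o}$ is zero-dimensional at $(E,0)$ --- a contradiction --- so this locus is empty and $h_{V_G}$ is quasifinite.

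Next, using the $\bG_m$-equivariant form of Zariski's Main Theorem (concretely, by factoring through the normalization of the scheme-theoretic image of $V_G$ in its function field), write $h_{V_G}$ $\bG_m$-equivariantly as an open immersion $\jmath\colon V_G\hookrightarrow\overline{V}$ followed by a finite morphism $g\colon\overline{V}\to A_{G,\cL}$, with $\overline{V}$ irreducible. Since $\overline{V}$ is irreducible, hence connected, \Cref{prop: 2} applied to the proper $\bG_m$-equivariant morphism $g$ (the action on $A_{G,\cL}$ being contracting) shows that $g^{-1}(\mathfrak{o})=\overline{V}\times_{A_{G,\cL}}\{\mathfrak{o}\}$ is connected; being a finite scheme (as $g$ is finite), it is supported at a single point, which must be $\jmath(E,0)$ since $(E,0)\in V_G$ lies over $\mathfrak{o}$. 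Then $\overline{V}\setminus\jmath(V_G)$ is closed and $\bG_m$-stable; if it were nonempty, any of its points would flow under $t\to0$ to a $\bG_m$-fixed point --- the limit exists because $g$ is proper and the action on $A_{G,\cL}$ is contracting --- lying simultaneously in $g^{-1}(\mathfrak{o})$ and in the closed $\bG_m$-stable set $\overline{V}\setminus\jmath(V_G)$, contradicting $g^{-1}(\mathfrak{o})=\{\jmath(E,0)\}\subseteq\jmath(V_G)$. Hence $\jmath$ is an isomorphism, $h_{V_G}=g$ is finite, and in particular proper. Finally, since $h\colon\MHit^d(C)\to A_{G,\cL}$ is separated (being proper), the factor $V_G\hookrightarrow\MHit^d(C)$ of the proper morphism $h_{V_G}$ is itself proper; being a monomorphism (a locally closed immersion), it is a closed immersion, which is $(a)$.

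\textbf{Main obstacle.} The delicate point is precisely the passage from quasifiniteness to finiteness of $h_{V_G}$ in the last step: a priori the boundary $\overline{V}\setminus\jmath(V_G)$ of the relative compactification could be nonempty and carry extra fibre over $\mathfrak{o}$. What makes it collapse is that $g$ is \emph{finite}, so that $g^{-1}(\mathfrak{o})$ is a finite scheme, combined with \Cref{prop: 2}, which pins this finite scheme down to a single point once $\overline{V}$ is known to be connected; after that the contracting $\bG_m$-action leaves no room for a boundary. (One could instead argue directly, by graded Nakayama, that $k[V_G]$ is a finite module over the graded subring $h_{V_G}^{*}\,k[A_{G,\cL}]$, but the argument above is the one in the spirit of this section.)
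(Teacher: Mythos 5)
Your proposal is correct, and the easy implications ((a)$\Leftrightarrow$(b) via ``proper monomorphism $=$ closed immersion'', (b)$\Rightarrow$(c) via affineness, and the upper-semicontinuity argument showing (d) implies quasifiniteness) coincide with the paper's. The two genuine divergences are worth recording. First, for (c)$\Rightarrow$(d) you observe that $h_{V_G}^{-1}(\mathfrak{o})$ is a closed $\bG_m$-stable cone in the vector space $V_G$, so zero-dimensionality alone forces it to be supported at the origin; the paper instead deduces finiteness of $h_{V_G}$ first and then invokes \Cref{prop: 2} to see that the finite fiber is connected, hence a single point. Your cone argument is more elementary here, but note that \Cref{prop: 2} is precisely the tool this section is meant to showcase. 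Second, and more substantially, for ``quasifinite $\Rightarrow$ proper'' the paper simply cites the equivariant properness criterion \cite[Prop. B5]{herrero2023meromorphic} (finiteness of the fiber over the fixed locus of a contracting base implies properness), whereas you reprove this criterion in the case at hand: an equivariant Zariski's Main Theorem factorization $V_G\hookrightarrow\overline{V}\xrightarrow{g}A_{G,\cL}$ with $g$ finite and $\overline{V}$ irreducible (using that $V_G$ is normal and irreducible so that the normalization of the image in $k(V_G)$ receives $V_G$ as an open subscheme), then \Cref{prop: 2} applied to $g$ to pin $g^{-1}(\mathfrak{o})$ down to the single point $\jmath(E,0)$, and finally the existence of $t\to 0$ limits (valuative criterion for the proper $g$ over the contracting base) to rule out a nonempty boundary $\overline{V}\setminus\jmath(V_G)$. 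This is correct and self-contained, at the cost of length; what the paper's citation buys is exactly this compactification-and-limit argument packaged as a general lemma. One small point to make explicit if you write this up: the $\bG_m$-action on $\overline{V}$ exists because the normalization construction is canonical, and \Cref{prop: 2} needs $g^{-1}(\mathfrak{o})$ nonempty as well as connected, which your argument does supply since $(E,0)$ lies over $\mathfrak{o}$.
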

    \begin{proof}  \quad 
    
    (a) $\Leftrightarrow$ (b). This is immediate.
   
            (b) $\Rightarrow$ (c). This follows because the source $V_G$ is affine.
            
             (c) $\Rightarrow$ (b). We equip the vector space $V_G$ with the contracting scaling $\bG_m$-action, and we equip the Hitchin base $A_{G,\cL}$ with its standard contracting $\bG_m$-action. The origin $\mathfrak{o} \in A_{G,\cL}(k)$ is the unique $\bG_m$-fixed point in $A_{G,\cL}$. If $h_{V_G}$ is quasifinite, then $h^{-1}_{V_G}\left(A_{G,\cL}^{\bG_m}\right) = h^{-1}_{V_G}(\mathfrak{o})$ is finite. Hence \cite[Prop. B5]{herrero2023meromorphic} applies to show that  $h_{V_G}$ is proper.
                
            (c) $\Rightarrow$ (d). By the equivalences proven in the paragraphs above, we know that $h_{V_G}$ is finite. Equip $V_G$ and $A_{G,\cL}$ with the $\bG_m$-actions as explained in the proof of (c) $\Rightarrow$ (b). Since $V_G$ is connected, \Cref{prop: 2} implies that $h_{V_G}^{-1}(\mathfrak{o})$ is connected. Since $h_{V_G}^{-1}(\mathfrak{o})$ is finite, it must consist of a single point. By definition, the points in $h_{V_G}^{-1}(\mathfrak{o})$ correspond to nilpotent Higgs fields on $E$, and therefore we conclude that the only nilpotent Higgs field is $0$. Hence, $E$ is very stable.
            
            (d) $\Rightarrow$ (c). Equip $V_G$ and $A_{G,\cL}$ with the $\bG_m$-actions as explained in the proof of (c) $\Rightarrow$ (b). By upper-semicontinuity of fiber dimension and the existence of zero limits for the $\bG_m$-action on $V_G$, it follows that the dimension of the preimage $h^{-1}_{V_G}(\mathfrak{o})$ of the unique fixed point $\mathfrak{o} \in A_{G,\cL}(k)$ is maximal. By assumption, this preimage consists of a single point, and so it has dimension $0$. It follows that all the fibers of $h_{V_G}$ have dimension $0$, as desired.  
    \end{proof}

    \begin{remark}
        \Cref{prop: 2} can also be used to prove the natural generalization of \cite[Thm. 1.1]{hausel-hitchin} to the setting of $G$-Hitchin pairs.
    \end{remark}
\end{subsection}
    
\end{section}

\bibliographystyle{alpha}
\footnotesize{\bibliography{categorical.bib}}

  \textsc{Department of Mathematics, University of Pennsylvania,
    David Rittenhouse Laboratory, 209 S 33rd St, Philadelphia, PA 19104,
USA}\par\nopagebreak
  \textit{E-mail address}: \texttt{andresfh@sas.upenn.edu}

       \textsc{School of Mathematics, Institute for Advanced Study, 1 Einstein Drive, Princeton, NJ, 08540,
USA}\par\nopagebreak
  \textit{E-mail address}: \texttt{szhang@ias.edu}

\end{document}